\documentclass{amsart}
\usepackage{enumerate}
\usepackage{amssymb,latexsym,amsxtra,amscd,amsfonts}
\usepackage[mathscr]{eucal}
\usepackage{dsfont}
\usepackage{calrsfs}





\newtheorem{thm}{Theorem}[section]
\newtheorem{cor}[thm]{Corollary}
\newtheorem{lem}[thm]{Lemma}
\newtheorem{pr}[thm]{Proposition}

\theoremstyle{definition}

\theoremstyle{remark}
\newtheorem{rem}[thm]{Remark}


\newcounter{obr}[section]

\newcounter{pvv}[section]
\renewcommand{\thepvv}{\thesection.\arabic{pvv}}
{\end{trivlist}}

\newcommand{\saM}{\m_{sa}}
\newcommand{\psaM}{\m_{*sa}}

\newcommand{\m}{\mathcal M}

\newcommand{\B}{\mathcal B}
\newcommand{\N}{\mathds N}
\newcommand{\R}{\mathds R}

\def\Re{\mathop{\rm Re}\nolimits}
\def\Im{\mathop{\rm Im}\nolimits}

\def\id{\mathop{\rm id}\nolimits}
\newcommand{\f}{\ensuremath{\varphi}}




\newcommand{\vNa}{von Neumann algebra}

\def\d{\,{\rm d}}

\title{On Markushevich bases in preduals of von Neumann algebras}

\author{Martin Bohata, Jan Hamhalter and Ond\v{r}ej F.K. Kalenda}

\address{Czech Technical University in Prague, Faculty of Electrical Engineering, Department of Mathematics, Technick\'a 2, 166~27 Prague 6, Czech Republic}
\email{bohata@math.feld.cvut.cz}
\email{hamhalte@math.feld.cvut.cz}
\address{Charles University in Prague, Faculty of Mathematics and Physics, Department of Mathematical Analysis,
Sokolovsk\'a 86, 186~75 Praha 8, Czech Republic}
\email{kalenda@karlin.mff.cuni.cz}

\begin{document}
\begin{abstract}
We prove that the predual of any von Neumann algebra is $1$-Plichko, i.e., it has a countably $1$-norming
Markushevich basis. This answers a question of the third author who proved the same for preduals of semifinite von Neumann algebras. As a corollary we obtain an easier proof of a result of U.~Haagerup that the predual of any von Neumann algebra
enjoys the separable complementation property. We further prove that the self-adjoint part of the predual is $1$-Plichko as well.
\end{abstract}
\keywords{predual of a von Neumann algebra, $1$-Plichko space, weakly compactly generated space, weakly Lindel\"of determined space}
\subjclass[2010]{46B26,46L10}
\thanks{Our research was supported in part by the grant GA\v{C}R P201/12/0290.}
\maketitle

\section{Introduction and main results}

An important tool for the study of nonseparable Banach spaces is a decomposition of the space to some smaller pieces, for example separable subspaces. A decomposition of this type can be done using various kinds of bases or systems of projections.
One of the largest classes of Banach spaces admitting a reasonable decomposition is that of Plichko spaces. The study of this class was initiated by A.~Plichko \cite{plichko82}, later it was investigated using different definitions for example in \cite{val90,val91,dg93}. It appeared to be a common roof for previous search for decompositions of nonseparable spaces in
\cite{lin65,lin66,AL,mer87,AMN} and elsewhere. A detailed survey on this class and some related classes can be found in \cite{survey}. It turned out that this class has several equivalent characterizations. Let us name some of them. We will use the following theorem. 

\smallskip

{\noindent \bf Theorem A. }{\it Let $X$ be a (real or complex) Banach space and let $D\subset X^*$ be a norming linear subspace. Then the following assertions are equivalent.
\begin{itemize}
	\item[(1)] There is a linearly dense set $M\subset X$ such that
	$$D=\{ x^*\in X^*: \{m\in M: x^*(m)\ne 0\}\mbox{ is countable}\}.$$
	\item[(2)] There is a Markushevich basis $(x_\alpha,x^*_\alpha)_{\alpha\in\Gamma}\subset X\times X^*$ such that
	 $$D=\{ x^*\in X^*: \{\alpha\in \Gamma: x^*(x_\alpha)\ne 0\}\mbox{ is countable}\}.$$
	 \item[(3)] There is a system of bounded linear projections $(P_\lambda)_{\lambda\in\Lambda}$ where $\Lambda$ is an up-directed set such that the following conditions are satisfied:
	 \begin{itemize}
	\item[(i)] $P_\lambda X$ is separable for each $\lambda$ and $X=\bigcup_{\lambda\in\Lambda} P_\lambda X$,
	\item[(ii)] $P_\lambda P_\mu=P_\mu P_\lambda =P_\lambda$ whenever $\lambda\le\mu$,
	\item[(iii)] if $(\lambda_n)$ is an increasing sequence in $\Lambda$, it has a supremum $\lambda\in\Lambda$ and $P_\lambda X=\overline{\bigcup_n P_{\lambda_n}X}$,
	\item[(iv)] $P_\lambda P_\mu=P_\mu P_\lambda$ for $\lambda,\mu\in \Lambda$,
	\item[(v)] $D=\bigcup_{\lambda\in\Lambda} P^*_\lambda X^*$.
\end{itemize}
	 \end{itemize}}

\smallskip

Recall that a subspace $D\subset X^*$ is \emph{norming} if $\|x\|_D=\sup\{|x^*(x)|:x^*\in D\cap B_{X^*}\}$ defines an equivalent norm on $X$. If $\|\cdot\|_D=\|\cdot\|$, the subspace $D$ is called \emph{$1$-norming}.
A subspace $D$ satisfying one of the equivalent conditions from Theorem A is called a \emph{$\Sigma$-subspace} of $X^*$. A Banach space admitting a norming $\Sigma$-subspace is said to be \emph{Plichko}. If it admits even a $1$-norming subspace, it is called \emph{$1$-Plichko}. If the dual $X^*$ itself is a $\Sigma$-subspace, $X$ is \emph{weakly Lindel\"of determined} (or, shortly, \emph{WLD}). 
 
Let us comment Theorem A and its proof. The condition (1) is used as a definition of a $\Sigma$-subspace for example in \cite{val-exa}, the definition used in \cite{survey} is easily seen to be equivalent. The implication (2)$\Rightarrow$(1) follows from the definition of a Markushevich basis, the implication (1)$\Rightarrow$(2) is proved in \cite[Lemma 4.19]{survey}. The Markushevich basis from the condition (2) is called \emph{countably norming} (\emph{countably $1$-norming} if $D$ is $1$-norming). This kind of bases were studied among others by A.~Plichko in \cite{plichko82}.

A family of projections satisfying the conditions (i)--(iii) from (3) is called \emph{projectional skeleton}. This notion was introduced by W.~Kubi\'s in \cite{kubisSkeleton}. A projectional skeleton fulfilling moreover the condition (iv) is said to be \emph{commutative}. The condition (v) says that $D$ is the \emph{subspace induced by the respective projectional skeleton}.
The implication (1)$\Rightarrow$(3) is proved in \cite[Proposition 21]{kubisSkeleton}, the converse implication follows from \cite[Theorem 27]{kubisSkeleton}. There are Banach spaces with a projectional skeleton but without a commutative one, see \cite{kubisSkeleton,cuthSimul}. 
 
$1$-Plichko spaces naturally appear in many branches of analysis. Some examples were collected in \cite{val-exa}. They include
spaces $L^1(\mu)$ for an arbitrary non-negative $\sigma$-additive measure $\mu$, order-continuous Banach lattices, the spaces $C(G)$ where $G$ is a compact abelian group and preduals of semifinite von Neumann algebras. It was asked in \cite[Question 7.5]{val-exa} whether the semifiniteness assumption can be omitted. We prove that it is the case. It is the content of the following theorem.
 
\begin{thm}\label{T:main1}
Let $\m$ be any \vNa. Its predual $\m_*$ is then $1$-Plichko. Moreover, $\m_*$ is weakly Lindel\"of determined if and only if $\m$ is $\sigma$-finite. In this case $\m_*$ is even weakly compactly generated.
\end{thm}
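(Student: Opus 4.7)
The plan is a two-stage structural reduction: first decompose $\mathcal{M}$ into $\sigma$-finite pieces via its center, then handle the $\sigma$-finite case directly.

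First, I would exploit the central decomposition. Let $\{z_\gamma\}_{\gamma\in\Gamma}$ be a maximal family of pairwise orthogonal nonzero central projections such that each $z_\gamma\mathcal{M}$ is $\sigma$-finite; then $\sum z_\gamma = 1$, so $\mathcal{M}\cong\prod_\gamma z_\gamma\mathcal{M}$ as a von Neumann algebra and $\mathcal{M}_* \cong \bigl(\bigoplus_\gamma (z_\gamma\mathcal{M})_*\bigr)_{\ell_1}$ isometrically. It is a routine permanence property of the class $1$-Plichko that $\ell_1$-sums preserve it: one concatenates Markushevich bases and observes that the union of $\Sigma$-subspaces, reinterpreted inside the $\ell_\infty$-direct-sum dual, remains a $1$-norming $\Sigma$-subspace in the sense of Theorem~A(1). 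This reduces the main statement to the $\sigma$-finite case.

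Second, assume $\mathcal{M}$ is $\sigma$-finite and fix a faithful normal state $\varphi$. My goal would be to show $\mathcal{M}_*$ is even weakly compactly generated, which immediately yields both $1$-Plichko and WLD. The natural generator is the order interval
\[ K := \{\psi\in\mathcal{M}_* : 0\le\psi\le\varphi\}. \]
Weak compactness of $K$ comes from the noncommutative Vitali--Hahn--Saks / Akemann theorem: any family in $\mathcal{M}_*^+$ uniformly dominated by a single normal state is uniformly absolutely continuous with respect to it and hence relatively weakly compact. Norm-density of $\operatorname{span} K$ follows from faithfulness of $\varphi$: every positive $\psi\in\mathcal{M}_*$ is a norm limit of $\psi\wedge n\varphi$ (after applying a spectral cutoff to the noncommutative Radon--Nikodym derivative), and the general case reduces to the positive one through Jordan decomposition.

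Third, I would assemble the two steps and deal with the ``moreover'' clause. The $\ell_1$-decomposition plus the $\sigma$-finite case yields that $\mathcal{M}_*$ is $1$-Plichko in general. If $\mathcal{M}$ is $\sigma$-finite, WCG was already proved, and WCG trivially implies WLD. Conversely, if $\mathcal{M}$ is not $\sigma$-finite, pick an uncountable family $\{p_\alpha\}_{\alpha\in\Lambda}$ of pairwise orthogonal nonzero projections; choosing vector states $\varphi_\alpha$ supported under $p_\alpha$ and using the projections $\psi\mapsto p_\alpha\psi p_\alpha$ produces an $\ell_1(\Lambda)$-summand of $\mathcal{M}_*$. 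Since $\ell_1(\Lambda)$ for uncountable $\Lambda$ is not WLD and being WLD passes to complemented subspaces, $\mathcal{M}_*$ is not WLD.

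The principal obstacle is the $\sigma$-finite case for type III algebras, where no semifinite trace is available and one cannot borrow classical $L^1$-theory outright. A robust alternative, if the order-interval argument proves stubborn, is to construct the projectional skeleton of Theorem~A(3) directly via modular theory: by Takesaki's conditional expectation theorem, every von Neumann subalgebra $\mathcal{N}\subseteq\mathcal{M}$ invariant under the modular automorphism group $\sigma^\varphi$ is the range of a unique $\varphi$-preserving normal conditional expectation $E_\mathcal{N}$, and its preadjoint $(E_\mathcal{N})_*$ is a norm-one projection on $\mathcal{M}_*$ whose range is naturally $\mathcal{N}_*$. The delicate technical point is then to exhibit an up-directed family of \emph{separable} $\sigma^\varphi$-invariant subalgebras with strongly dense union that is stable under countable increasing suprema, so that condition~(iii) of Theorem~A(3) can be verified.
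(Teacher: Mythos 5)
Your first reduction step contains a fatal error: a general von Neumann algebra does \emph{not} decompose as a direct sum of $\sigma$-finite algebras over central projections. Take $\m=\B(\ell_2(\Gamma))$ with $\Gamma$ uncountable: this is a factor, so its only nonzero central projection is $1$, and $\m$ itself is not $\sigma$-finite; hence your maximal family $\{z_\gamma\}$ is empty and certainly does not sum to $1$. (Maximality would force $\sum_\gamma z_\gamma=1$ only if every nonzero central projection dominated a nonzero central projection with $\sigma$-finite corner, and that is exactly what fails for non-$\sigma$-finite factors.) This is not a repairable detail but the crux of the whole problem: the paper instead decomposes the unit into mutually orthogonal \emph{cyclic} projections $(p_\lambda)$ via Proposition~\ref{P:rozklad}; these are $\sigma$-finite but not central, so the pieces $L_{p_\lambda}\m_*$ do not form an $\ell_1$-sum (the off-diagonal blocks $p_\lambda\m p_\mu$ survive), only a $1$-unconditional sum. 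All of the paper's technical work --- the countability Lemma~\ref{L:spocetnost} for $\{\mu:p_\lambda xp_\mu\ne0\}$, the WCG property of $L_{p_\lambda}\m_*$ from Lemma~\ref{L:wcg1}, and Plichko's Proposition~\ref{P:1-uncond} on $1$-unconditional sums of WLD spaces --- exists precisely to cope with this non-central decomposition. Your $\ell_1$-sum permanence argument, while itself a true statement about $1$-Plichko spaces, has nothing to apply to, and the general (non-$\sigma$-finite) case of the main assertion remains unproved in your scheme.

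Your second stage is essentially sound and is a genuinely different, more classical route than the paper's for the $\sigma$-finite case: the order interval $K=[0,\varphi]$ of a faithful normal state is indeed weakly compact (Akemann/Takesaki), and $\operatorname{span}K$ is norm dense --- though not for the reason you give. The infimum $\psi\wedge n\varphi$ does not exist in general, since $\m_*^+$ is a lattice only for abelian $\m$; instead one should argue in the standard form that every vector functional $\omega_{a'\Omega}$ with $a'\in\m'$ satisfies $\omega_{a'\Omega}(x)=\|a'x^{1/2}\Omega\|^2\le\|a'\|^2\varphi(x)$ for $x\ge0$, hence lies in $\|a'\|^2 K$, and such functionals span a dense subspace of $\m_*$. (The paper avoids this entirely: for countable $\Lambda$ it just takes the countable union of the WCG pieces $L_{p_\lambda}\m_*$, each of which is a quotient of a GNS Hilbert space.) Your converse direction, producing a copy of $\ell_1(\Lambda)$ from an uncountable orthogonal family of projections, matches the paper's argument and is fine. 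The modular-theoretic fallback you sketch is not needed for the $\sigma$-finite case and would not, by itself, address the missing reduction either.
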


Recall that a \vNa{} is \emph{$\sigma$-finite} if any orthogonal family of its projections is countable. The basic setting of \vNa{}s is recalled in Section~\ref{S:vN}. As a corollary we get an alternative proof of the following result.

\begin{cor}[U.~Haagerup, Theorem IX.1 of \cite{GGMS}]
The predual of any \vNa{} enjoys the $1$-separable complementation property. I.e., any separable subspace is contained in a $1$-complemented separable superspace.
\end{cor}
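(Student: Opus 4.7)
The plan is to deduce the corollary directly from Theorem~\ref{T:main1} by invoking the projectional skeleton characterization (3) of Theorem~A. Since $\m_*$ is $1$-Plichko, Theorem~A provides a commutative projectional skeleton $(P_\lambda)_{\lambda\in\Lambda}$ on $\m_*$ whose induced $\Sigma$-subspace $D=\bigcup_\lambda P_\lambda^*\m_*^*$ is $1$-norming. The one additional ingredient, standard in the theory of $1$-Plichko spaces, is that when $D$ is $1$-norming one can arrange the skeleton projections so that $\|P_\lambda\|=1$ for every $\lambda$; this follows from the sharper form of the implication (1)$\Rightarrow$(3) in Theorem~A, where the norm of the projections is controlled by the norming constant of $D$.

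Let $Y\subset\m_*$ be an arbitrary separable subspace and fix a countable dense sequence $(y_n)$ in $Y$. By condition (i) there exist $\lambda_n\in\Lambda$ with $y_n\in P_{\lambda_n}\m_*$. Using the up-directedness of $\Lambda$, pick inductively an increasing sequence $\mu_n\in\Lambda$ with $\mu_n\geq\lambda_k$ for all $k\leq n$. For each such pair $\lambda_k\leq\mu_n$, condition (ii) gives $P_{\mu_n}P_{\lambda_n}=P_{\lambda_n}$, so $P_{\lambda_n}\m_*\subset P_{\mu_n}\m_*$, and therefore $y_n\in P_{\mu_n}\m_*$. Condition (iii) now furnishes the supremum $\mu=\sup_n\mu_n\in\Lambda$ with
$$P_\mu\m_*=\overline{\bigcup_n P_{\mu_n}\m_*}.$$
As $\{y_n\}\subset\bigcup_n P_{\mu_n}\m_*\subset P_\mu\m_*$ and $P_\mu\m_*$ is closed, we obtain $Y\subset P_\mu\m_*$. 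By (i), $P_\mu\m_*$ is separable, and since $\|P_\mu\|=1$ it is $1$-complemented in $\m_*$. This is exactly the $1$-separable complementation property.

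The only delicate point, and hence the main obstacle in this sketch, is the assertion that $1$-Plichko spaces admit a projectional skeleton of norm-one projections; this is what upgrades mere complementation to $1$-complementation. Once this is in hand, the rest is a routine exploitation of the defining conditions (i)--(iii) of the skeleton together with the density of $(y_n)$ in $Y$.
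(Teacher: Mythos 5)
Your proposal is correct and follows essentially the same route as the paper: the authors likewise deduce the corollary immediately from condition (3) of Theorem~A applied to the $1$-norming $\Sigma$-subspace provided by Theorem~\ref{T:main1}, together with the fact (which they cite to \cite[Theorem 27]{kubisSkeleton}) that the skeleton projections can be taken of norm one when $D$ is $1$-norming. Your elaboration of the routine step --- absorbing a dense sequence of the separable subspace into a single $P_\mu\m_*$ via up-directedness and condition (iii) --- is exactly what the paper leaves implicit (modulo the harmless index slip $P_{\mu_n}P_{\lambda_n}=P_{\lambda_n}$, which should read $P_{\mu_n}P_{\lambda_k}=P_{\lambda_k}$ for $k\le n$).
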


Let us remark that the original proof used very advanced areas of the theory of \vNa{}s. Our proof is more elementary, it follows immediately from the characterization of $1$-Plichko spaces using the condition (3) of Theorem A, together with the observation that the projections can have norm one if $D$ is $1$-norming \cite[Theorem 27]{kubisSkeleton}.

Since the dual of any $C^*$-algebra is a predual of a \vNa{} by \cite[Theorem III.2.4]{takesaki}, we get also positive answers to \cite[Questions 7.6 and 7.7]{val-exa} contained in the following corollary.

\begin{cor} The dual of any $C^*$-algebra is $1$-Plichko. \end{cor}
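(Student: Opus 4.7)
The plan is to reduce this corollary directly to Theorem~\ref{T:main1}. For any \Ca{} $A$, the bidual $A^{**}$ carries a canonical structure of a \vNa{} (the universal enveloping \vNa{}), and its predual is canonically isometrically identified with the original dual $A^{*}$; this is precisely the content of \cite[Theorem III.2.4]{takesaki}, which the authors have already invoked in the sentence introducing the corollary. Setting $\m := A^{**}$, an application of Theorem~\ref{T:main1} yields that $\m_{*} \cong A^{*}$ is $1$-Plichko.

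Since the $1$-Plichko property is an isometric (indeed, isomorphic) invariant of Banach spaces, the conclusion transfers to $A^{*}$ itself without any additional argument. There is essentially no obstacle here: the whole substance has been carried out in Theorem~\ref{T:main1}, and the corollary only records the consequence of identifying the dual of a \Ca{} with the predual of its enveloping \vNa{}. The only minor point to verify would be that no unitality or separability hypotheses are secretly used in the identification $A^{*} = (A^{**})_{*}$, but the cited theorem of Takesaki applies to arbitrary \Ca{}s, so nothing further is needed.
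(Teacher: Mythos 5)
Your argument is exactly the paper's: the dual of a $C^*$-algebra $A$ is identified with the predual of the enveloping von Neumann algebra $A^{**}$ via \cite[Theorem III.2.4]{takesaki}, and Theorem~\ref{T:main1} then gives the conclusion. This is correct and is precisely the one-line derivation the authors intend.
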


Further, the following theorem gives a positive answer to \cite[Question 7.3]{val-exa}.

\begin{thm}\label{T:main2}
Let $\m$ be any \vNa{} and denote by $\psaM$ the self-adjoint part of its predual. Then $\psaM$ is $1$-Plichko. Moreover, $\psaM$ is weakly Lindel\"of determined if and only if $\m$ is $\sigma$-finite. In this case $\psaM$ is even weakly compactly generated.
\end{thm}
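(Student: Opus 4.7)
The plan is to deduce Theorem \ref{T:main2} from Theorem \ref{T:main1} by observing that the $1$-projectional skeleton furnished by Theorem \ref{T:main1} can be arranged to commute with the involution, and therefore restricts to a commutative $1$-projectional skeleton on the real Banach subspace $\psaM$. I expect each projection $P_\lambda\colon\m_*\to\m_*$ produced in the proof of Theorem \ref{T:main1} to be built from a $\sigma$-finite projection $p_\lambda\in\m$ (together with a conditional-expectation type construction), so that it is given on suitable corners by a formula of the form $(P_\lambda\varphi)(x)=\varphi(p_\lambda xp_\lambda)$. The identity $(pxp)^*=px^*p$, valid for any self-adjoint projection $p$, then forces each such $P_\lambda$ to commute with the involution $\varphi\mapsto\varphi^*$ on $\m_*$, whence $P_\lambda(\psaM)\subset\psaM$ and the restriction $Q_\lambda:=P_\lambda|_{\psaM}$ is a norm-one real-linear projection onto a separable subspace of $\psaM$.

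The second step is to verify that $(Q_\lambda)_{\lambda\in\Lambda}$ is a commutative $1$-projectional skeleton on $\psaM$ viewed as a real Banach space. Properties (ii) and (iv) of Theorem A transfer directly from $(P_\lambda)$. Separability of $Q_\lambda\psaM$ is immediate from separability of $P_\lambda\m_*$, and both the density condition in (i) and the supremum condition (iii) transfer by taking closures inside $\psaM$ and exploiting norm-continuity of the involution. Under the canonical real-linear duality $(\psaM)^*\cong\saM$, the fact that each $Q_\lambda$ has norm one provides a $1$-norming $\Sigma$-subspace of $\saM$; by the real analogue of Theorem A, $\psaM$ is $1$-Plichko.

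For the dichotomy concerning WLD and WCG: if $\m$ is $\sigma$-finite, the argument used in Theorem \ref{T:main1} to show that $\m_*$ is WCG (via a weakly compact set built from a faithful normal state) adapts to $\psaM$ by passing to self-adjoint parts, so $\psaM$ is WCG and hence WLD. Conversely, if $\m$ is not $\sigma$-finite, there is an uncountable pairwise orthogonal family of non-zero projections $(e_\gamma)_{\gamma\in\Gamma}\subset\m$ with accompanying normal states $\omega_\gamma\in\psaM$ satisfying $\omega_\gamma(e_{\gamma'})=\delta_{\gamma\gamma'}$; the closed subspace they generate in $\psaM$ is isometric to $\ell^1(\Gamma)$, which is not WLD for uncountable $\Gamma$, and since WLD is inherited by closed subspaces, $\psaM$ cannot be WLD either. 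The principal obstacle is the first step: one has to inspect the actual construction behind Theorem \ref{T:main1} and check that its projections are indeed $*$-compatible, which essentially amounts to ensuring that they are built from self-adjoint projections in $\m$ rather than from a construction that breaks the involution.
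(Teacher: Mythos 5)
Your reduction to Theorem \ref{T:main1} hinges on the claim that the proof of that theorem produces a commutative $1$-projectional skeleton $(P_\lambda)$ on $\m_*$ whose members have the form $(P_\lambda\f)(x)=\f(p_\lambda xp_\lambda)$ for suitable projections $p_\lambda\in\m$, and hence commute with the involution. This is a genuine gap, and although you flag it yourself as the principal obstacle, it is more serious than a matter of ``inspecting the construction'': no such skeleton is constructed in the proof of Theorem \ref{T:main1}, which works with a $1$-unconditional decomposition and $\Sigma$-subspaces (conditions (1)--(2) of Theorem A); a skeleton exists only via the abstract equivalence with condition (3), through the closing-off construction of Kubi\'s, whose projections carry no such algebraic description. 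Moreover, the natural candidates, the corner maps $\f\mapsto\f(q\,\cdot\,q)$ for $\sigma$-finite projections $q$, cannot serve as a skeleton: a $\sigma$-finite algebra $q\m q$ need not have separable predual (e.g.\ $L^\infty(\mu)$ for a finite non-separable measure $\mu$), so condition (3)(i) fails for them. Arranging a skeleton whose ranges are invariant under the involution $A$ would require redoing the closing-off argument with this extra requirement, which is essentially a new proof rather than a transfer.

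The paper avoids skeletons altogether. It identifies $(\psaM)^*$ with $\saM$ (a Hahn--Banach argument you also use implicitly), takes the canonical $1$-norming $\Sigma$-subspace $D\subset\m$ of Proposition \ref{P:Sigma} (the elements supported by a $\sigma$-finite projection), passes to its real form $D_R$, and shows that $\psaM$ is $\sigma(\m_*,D_R)$-closed in the real version of $\m_*$. The key point making this work is Proposition \ref{P:rozkop}, which writes every $x\in\m$ as $\sum_j q_jxq_j$ over an orthogonal family of $\sigma$-finite projections, so that membership of $\omega$ in $\psaM$ is tested only against functionals $\omega\mapsto\Re\omega(iqxq)$ coming from $D_R$. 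A general result on $\Sigma$-subspaces and weakly closed subspaces then yields that $\Psi(D_{sa})$ is a $1$-norming $\Sigma$-subspace of $(\psaM)^*$. Your treatment of the WLD/WCG dichotomy is essentially the paper's (except that in the $\sigma$-finite case the paper simply observes that $\psaM$ is the range of the norm-one real-linear projection $S$ on the WCG space $\m_*$, rather than adapting a faithful-normal-state construction), so that part is fine; the missing ingredient is the first step.
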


The paper is organized as follows. In Section~\ref{S:pl} we collect some facts on Plichko spaces and related classes of Banach spaces (WLD spaces, weakly compactly generated spaces). Section~\ref{S:vN} contains basic facts on \vNa{}s and their preduals
and, moreover, several auxilliary results used in the proof of the main theorems. The final section contains the proofs of the main results and some remarks.

\section{Some facts on Plichko spaces}\label{S:pl}

In this section we collect several facts on Plichko spaces and related classes of Banach spaces which will be needed to prove our main results. 

The key tool is a result on $1$-unconditional sums of WLD spaces. Let us first define this kind of sums. Let $X$ be a Banach space and $(X_\lambda)_{\lambda\in \Lambda}$ be an indexed family of closed subsets of $X$. The space $X$ is said to be
the \emph{$1$-unconditional sum} of the family  $(X_\lambda)_{\lambda\in \Lambda}$ if the following three conditions are satisfied.
\begin{itemize}
	\item[(1)] $X_\lambda\cap X_\mu=\{0\}$ whenever $\lambda,\mu\in\Lambda$ are distinct;
	\item[(2)] $\|\sum_{\lambda\in F} x_\lambda\|\le\|\sum_{\lambda\in G} x_\lambda\|$ whenever $F\subset G$ are finite subsets of $\Lambda$ and $x_\lambda\in X_\lambda$ for  $\lambda\in G$;
	\item[(3)] the linear span of $\bigcup_{\lambda\in\Lambda} X_\lambda$ is dense in $X$.
\end{itemize}
Note that the condition (1) follows from the condition (2). However, we prefer to formulate it explicitly, as usually the validity of (1) is used in the proof of (2). The promised result is the following one.

\begin{pr}\label{P:1-uncond} Let $X$ be a Banach space which is the $1$-unconditional sum of a family  $(X_\lambda)_{\lambda\in \Lambda}$ of its closed subspaces. If each $X_\lambda$ is WLD, then $X$ is $1$-Plichko.
Moreover, 
$$\{x^*\in X^*: \{\lambda\in\Lambda: x^*|_{X_\lambda}\ne 0\}\mbox{ is countable}\}$$
is a $1$-norming $\Sigma$-subspace of $X^*$.
\end{pr}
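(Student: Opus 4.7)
The plan is to verify characterization (1) of Theorem~A for the set $D$ specified in the statement. I will (a) extract a canonical system of commuting contractive projections $P_F$ (for finite $F\subset\Lambda$) from the $1$-unconditional sum structure; (b) use these to check directly that $D$ is $1$-norming; and (c) produce a linearly dense subset $M\subset X$, built by aggregating linearly dense subsets of the $X_\lambda$'s, which induces $D$ in the sense of (1).

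The projections come almost for free from the definition. For finite $F\subset\Lambda$, condition (2) says precisely that the partial-sum map $\sum_{\lambda\in G}x_\lambda\mapsto\sum_{\lambda\in F\cap G}x_\lambda$ is a contraction on the algebraic sum $\operatorname{span}\bigcup_\lambda X_\lambda$, and by condition (3) this subspace is dense, so the map extends uniquely to a norm-one projection $P_F$ on $X$ satisfying $P_{\{\lambda\}}|_{X_\mu}=\delta_{\lambda\mu}\operatorname{id}_{X_\mu}$ and $P_F=\sum_{\lambda\in F}P_{\{\lambda\}}$. Commutativity $P_FP_G=P_{F\cap G}$ follows by checking it on the algebraic sum and extending by continuity.

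To show $D$ is $1$-norming, let $x\in X$ and $\varepsilon>0$; choose $y=\sum_{\lambda\in F}y_\lambda$ with $F$ finite and $\|x-y\|<\varepsilon$, and pick $y^*\in B_{X^*}$ with $y^*(y)=\|y\|$ via Hahn--Banach. Then $z^*:=y^*\circ P_F$ has $\|z^*\|\le 1$ and vanishes on $X_\lambda$ for $\lambda\notin F$, so $z^*\in D$; moreover $|z^*(x)|\ge|z^*(y)|-\|x-y\|=\|y\|-\|x-y\|\ge\|x\|-2\varepsilon$, giving $\|x\|_D\ge\|x\|$.

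For the $\Sigma$-subspace property, I invoke the WLD hypothesis on each $X_\lambda$ via characterization (1) of Theorem~A applied with $D=X_\lambda^*$: each $X_\lambda$ carries a linearly dense $M_\lambda\subset X_\lambda$ such that every $\phi\in X_\lambda^*$ has countable support on $M_\lambda$. The set $M:=\bigcup_\lambda M_\lambda$ is linearly dense in $X$ by condition (3) combined with the linear density of $M_\lambda$ in $X_\lambda$. A double-inclusion argument identifies $D$ with $\{x^*\in X^*:\{m\in M:x^*(m)\ne 0\}\text{ is countable}\}$: if $x^*\in D$ then its support on $M$ is confined to the countably many $\lambda$ with $x^*|_{X_\lambda}\ne 0$, and on each such $M_\lambda$ the WLD property contributes only countably many nonzero values; conversely, countable support on $M$ forces $x^*$ to vanish on $M_\lambda$, hence on $X_\lambda$, for all but countably many $\lambda$. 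The only delicate step is this bookkeeping identification, since the structural work is already absorbed into the WLD assumption on each $X_\lambda$ and the existence of the contractive projections $P_F$.
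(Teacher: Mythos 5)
Your proof is correct. Note that the paper itself does not actually prove Proposition~\ref{P:1-uncond}: it attributes the result to Plichko and cites Step 3 of the proof of Theorem 6.3 in \cite{val-exa}. So you are in effect reconstructing that argument, and you do so with the standard ingredients: the contractive partial-sum projections $P_F$ obtained from condition (2), the Hahn--Banach argument showing $D$ is $1$-norming via $z^*=y^*\circ P_F$, and the verification of condition (1) of Theorem A with $M=\bigcup_\lambda M_\lambda$, where $M_\lambda$ witnesses that $X_\lambda^*$ is a $\Sigma$-subspace of itself. Two small points deserve to be made explicit. First, before calling the partial-sum map a contraction you need it to be well defined, i.e., the representation $y=\sum_{\lambda\in G}x_\lambda$ with $x_\lambda\in X_\lambda$ must be unique; this follows at once from condition (2) applied with $F=\{\mu\}$ to the difference of two representations (the same observation shows condition (1) of the definition is redundant, as the paper remarks). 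Second, Theorem A takes as a hypothesis that $D$ is a norming \emph{linear subspace}, so you should record the one-line check that $D$ is linear: the set $\{\lambda: (x^*+y^*)|_{X_\lambda}\ne 0\}$ is contained in the union of the supports of $x^*$ and $y^*$. The double-inclusion bookkeeping in your last step is sound (the overlap $M_\lambda\cap M_\mu\subset\{0\}$ is harmless, and $x^*$ vanishing on $M_\lambda$ forces $x^*|_{X_\lambda}=0$ by linear density), so with these two remarks added the argument is complete and self-contained.
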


\begin{proof} This result is due to A.~Plichko \cite{plichkomail}. A proof can be found in \cite[Step 3 of the proof of Theorem 6.3]{val-exa}.
\end{proof}

An important subclass of Plichko spaces is that of weakly compactly generated spaces. Let us recall that a Banach space $X$ is said to be \emph{weakly compactly generated} (or, shortly, \emph{WCG}) if there is a weakly compact subset of $X$ whose linear span is dense in $X$. The following proposition summarizes some properties of WCG spaces which we will use in the sequel.

\begin{pr}\label{P:wcg} \ 
\begin{itemize}
	\item[(i)] Any reflexive space (in particular, any Hilbert space) is WCG.
	\item[(ii)] Let $X$ be a complex Banach space. Then $X$ is WCG if and only if the real version of $X$ (i.e., the same space considered as a real space) is WCG.
	\item[(iii)] Let $X$ and $Y$ be two Banach spaces. Suppose that $X$ is WCG and that there is a continuous real-linear operator $T:X\to Y$ with dense range. Then $Y$ is WCG.
	\item[(iv)] Let $X$ be a Banach space and $Y_n$, $n\in\N$, a sequence of closed subspaces of $X$. If each $Y_n$ is WCG and the linear span of $\bigcup_{n\in\N} Y_n$ is dense in $X$, then $X$ is WCG as well.
	\item[(v)] Any WCG space is WLD.
\end{itemize}
\end{pr}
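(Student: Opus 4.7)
The plan is to handle each of the five items independently. Item (i) is immediate: the closed unit ball of a reflexive space is weakly compact (Banach--Alaoglu combined with reflexivity), and its linear span is all of $X$. Item (v) is the classical Amir--Lindenstrauss theorem, so I would simply cite a standard reference.

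For item (ii), the key observation is that the weak topologies on a complex Banach space $X$ and on $X$ regarded as a real space coincide: every continuous real-linear functional $g$ on $X$ is the real part of the continuous complex-linear functional $x\mapsto g(x)-ig(ix)$, and conversely the real part of a continuous complex-linear functional is a continuous real-linear functional. Hence weak compactness is the same in both pictures. Since multiplication by $i$ is weakly continuous, one can pass between complex and real linear spans: if $K\subset X$ is weakly compact with dense complex-linear span, then $K\cup iK$ is weakly compact with dense real-linear span; conversely, any real-linear span is contained in the complex-linear span, so the same $K$ works in that direction. For item (iii), any continuous real-linear operator $T\colon X\to Y$ is weak-to-weak continuous, so $T(K)$ is weakly compact in $Y$ and its linear span contains $T(\spn K)$, which is dense in $T(X)$ and hence in $Y$; the scalar-field issue is neutralized by (ii).

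For item (iv), I would use the standard telescoping construction. Choose, for each $n$, a weakly compact $K_n\subset Y_n$ whose linear span is dense in $Y_n$; since $K_n$ is norm-bounded, rescale it so that $K_n\subset\frac{1}{n}B_X$. Put $K=\{0\}\cup\bigcup_{n\in\N}K_n$. The linear span of $K$ contains every $Y_n$ and is therefore dense in $X$. Weak compactness of $K$ follows from Eberlein--\v{S}mulian by verifying sequential compactness: a sequence in $K$ either has infinitely many terms in a single $K_n$, where one extracts a weakly convergent subsequence, or has norms tending to zero, in which case it converges in norm (hence weakly) to $0\in K$. The only mildly delicate point in the whole proposition is item (ii), where one must keep track of the scalar field carefully; everything else is either immediate, a direct use of weak-to-weak continuity, or the $\frac{1}{n}$-rescaling trick.
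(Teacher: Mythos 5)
Your proposal is correct and follows essentially the same route as the paper: (i) and (v) are dispatched by standard facts (the latter citing Amir--Lindenstrauss), (ii) rests on the coincidence of the real and complex weak topologies, (iii) is deduced via weak-to-weak continuity and (ii), and (iv) uses the same rescaled union $K=\{0\}\cup\bigcup_n K_n$ with $K_n$ shrunk to have small norm. The only cosmetic difference is that you justify weak compactness of $K$ in (iv) via Eberlein--\v{S}mulian, whereas the paper simply asserts it; both are fine.
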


\begin{proof} The assertion (i) is well known and trivial. The assertion (ii) easily follows from the well-known fact that the weak topology of $X$ as a complex space coincides with the weak topology of $X$ as a real space. The assertion (iii) is then 
a consequence of (ii).

(iv) This is well known and easy to see. We include an easy proof for completeness. Let $K_n$ be a weakly compact subset of $Y_n$ whose linear span is dense in $Y_n$. By the uniform boundedness principle the set $K_n$ is bounded, hence we can fix $C_n>0$ such that $\|x\|\le C_n$ for $x\in K_n$. Set $K=\{0\}\cup\bigcup_{n\in\N}\frac1{nC_n}K_n$. Then $K$ is weakly compact
in $X$ and its linear span is dense in $X$.

The assertion (v) is nontrivial but well known. It follows from \cite[Proposition 2]{AL}.
\end{proof}

The following proposition is a special case of the assertion (v) of the previous proposition (due to assertions (i) and (iii)).
But we include it since its proof is short and elementary (unlike the proof of (v)) and we will need only this case.

\begin{pr}\label{P:hg} 
Let $X$ be a Hilbert space, $Y$ a Banach space and $T:X\to Y$ a bounded real-linear operator with dense range. Then $Y$ is WLD.
\end{pr}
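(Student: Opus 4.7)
The plan is to exhibit a linearly dense subset $M\subset Y$ witnessing that $Y^*$ itself is a $\Sigma$-subspace of $Y^*$, i.e., that $Y$ is WLD. To do so I would use the image under $T$ of an orthonormal basis of $X$.

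First, fix an orthonormal basis $(e_\alpha)_{\alpha\in\Gamma}$ of $X$ (viewing $X$ as a real Hilbert space, which is legitimate since $T$ is only required to be real-linear). Set $M=\{Te_\alpha:\alpha\in\Gamma\}$. Since the linear span of $(e_\alpha)$ is dense in $X$ and $T$ has dense range, $M$ is linearly dense in $Y$ (in the real sense, which is enough).

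Next, I would verify the countable-support condition from (1) of Theorem A with $D=Y^*$. Fix $y^*\in Y^*$. The composition $y^*\circ T\colon X\to\mathbb{K}$ is a bounded real-linear functional on the real Hilbert space $X$; writing it as $u+iv$ if $\mathbb{K}=\mathbb{C}$ (and just $u$ if $\mathbb{K}=\mathbb{R}$), the real Riesz representation theorem yields $a,b\in X$ with
$$
y^*(Te_\alpha)=\langle e_\alpha,a\rangle+i\langle e_\alpha,b\rangle\qquad(\alpha\in\Gamma).
$$
Because the Parseval identity gives $\sum_\alpha|\langle e_\alpha,a\rangle|^2=\|a\|^2<\infty$ and similarly for $b$, only countably many $\alpha$ satisfy $\langle e_\alpha,a\rangle\ne 0$ or $\langle e_\alpha,b\rangle\ne 0$. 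Hence $\{m\in M: y^*(m)\ne 0\}$ is countable.

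Consequently the whole dual $Y^*$ coincides with the set described in condition (1) of Theorem A with this $M$, which means $Y^*$ is a (trivially $1$-norming) $\Sigma$-subspace of $Y^*$; equivalently, $Y$ is WLD. No real obstacle is anticipated — the only mild point to watch is handling the complex case by splitting $y^*\circ T$ into real and imaginary parts, since $T$ need not be complex-linear even when $Y$ is a complex Banach space.
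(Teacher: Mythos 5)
Your argument is correct and follows essentially the same route as the paper: take the image of an orthonormal basis as the linearly dense set $M$ and use Riesz representation plus Parseval to see that each $y^*\in Y^*$ has countable support on $M$. The only (cosmetic) difference is that you treat the real-linear case directly by splitting $y^*\circ T$ into real and imaginary parts, whereas the paper first proves the linear case and then reduces the real-linear one to it by passing to real versions of the spaces.
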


\begin{proof} Let us first suppose that $T$ is linear. Fix an orthonormal basis $(e_\lambda)_{\lambda\in\Lambda}$ of $X$ and set $M=\{T(e_\lambda):\lambda\in\Lambda\}$. Then $M$ is clearly linearly dense in $Y$. Moreover, let $y^*\in Y^*$ be arbitrary. For each $\lambda\in \Lambda$ we have $y^*(Te_\lambda)=T^*y^*(e_\lambda)$. Hence
$$\{\lambda\in \Lambda:y^*(Te_\lambda)\ne0\}=\{\lambda\in \Lambda:T^*y^*(e_\lambda)\ne0\}$$
is countable. This shows that $Y^*$ is a $\Sigma$-subspace of itself (it satisfies the condition (1) from Theorem A). 

Now, suppose that $T$ is just real-linear. Consider $X$ and $Y$ as real spaces. Since the real version of a complex Hilbert space is a real Hilbert space, by the linear case we get that $Y$ is WLD as a real space. Fix a set $M$ witnessing the validity of condition (1) from Theorem A. If $Y$ is complex, the same set $M$ witnesses that it is WLD also as a complex space. Indeed,
for any $y^*\in Y^*$ we have
$$\{m\in M: y^*(m)\ne0\}\subset\{m\in M: \Re y^*(m)\ne0 \mbox{ or } \Im y^*(m)\ne0\}$$
which is a countable set.
\end{proof}

\section{Auxilliary results on von Neumann algebras}\label{S:vN}

In this section we collect basic definitions and some results on von Neumann algebras and their preduals which we will use
in the proof of the main results. We start by fixing the basic notation.

Let $H$ be a complex Hilbert space. By $\B(H)$ we denote the algebra of all bounded linear operators on $H$. For a subset $\mathcal{A}\subset\B(H)$ we denote by $\mathcal{A}'$ its \emph{commutant}, i.e., the set of all the operators commuting with all the elements of $\mathcal{A}$. Further, $\m\subset\B(H)$ is a \emph{\vNa} if it is a $*$-subalgebra (i.e., a linear subspace which is closed with respect to composition and  taking the adjoint) which is equal to its double-commutant $\m''$.
Any \vNa{} $\m$ admits a unique predual (see, e.g., \cite[Theorem II.2.6(iii) and Corollary III.3.9]{takesaki}) which we denote by $\m_*$. 

In the sequel we suppose that $H$ is a fixed complex Hilbert space and $\m\subset\B(H)$ a fixed \vNa.

We will need certain standard operators on $\m^*$ (the Banach-space dual of $\m$) which we will denote $A$, $S$, $L_a$ and $R_a$ for $a\in\m$. They are defined as follows.
$$\begin{aligned}
A\f(x)&=\overline{\f(x^*)},\\
S\f(x)&=\frac12(\f(x)+A\f(x))=\frac12(\f(x)+\overline{\f(x^*)}),\\
L_a\f(x)&=\f(ax),\\
R_a\f(x)&=\f(xa)
\end{aligned}$$
for $\f\in\m^*$ and $x\in\m$. Note that $A\f=\f$ if and only if $S\f=\f$. Such functionals are called \emph{self-adjoint} (or \emph{hermitian}). The real Banach space of all the self-adjoint functionals on $\m$ is denoted by $\m^*_{sa}$, the self-adjoint part of $\m_*$ is denoted by $\psaM$.

The following lemma summarizes the basic properties of the above-defined operators:	

\begin{lem}\label{L:oper} \ 
\begin{itemize}
	\item[(i)] The operator $A$ is a conjugate-linear isometry, the operator $S$ is a real-linear projection of norm one.
	\item[(ii)] The operators $L_a$ and $R_a$ are linear and $\|L_a\|\le\|a\|$, $\|R_a\|\le\|a\|$ for any $a\in\m$.
	\item[(iii)] $L_aR_b=R_bL_a$, $L_aL_b=L_{ab}$, $R_aR_b=R_{ba}$ for each $a,b\in\m$.
	\item[(iv)] $AL_a=R_{a^*}A$ and $AR_a=L_{a^*}A$ for each $a\in\m$.
	\item[(v)] The predual $\m_*$ is invariant for operators $A$, $S$, $L_a$ and $R_a$, $a\in\m$.
\end{itemize}
\end{lem}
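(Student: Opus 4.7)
The plan is to verify all five assertions by direct manipulation of the definitions; the only place where something beyond algebra enters is (v), where one needs the standard characterization of $\m_*$ as the space of ultraweakly (equivalently, normal) continuous functionals on $\m$.

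For (i), conjugate-linearity of $A$ is immediate from the definition, since complex conjugation is conjugate-linear in the scalar while $x\mapsto x^*$ is linear in $x$; isometry follows from $\|x^*\|=\|x\|$ in the underlying \Ca, and a one-line check gives $A^2=\mathrm{id}$. Consequently $S=\tfrac12(I+A)$ is a real-linear idempotent; the norm bound $\|S\|\le 1$ is the triangle inequality together with $\|A\|=1$, and equality is realized by any nonzero state, which is self-adjoint and hence a fixed point of $S$.

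The assertions (ii)--(iv) are pure computation. Linearity of $L_a$ and $R_a$ is obvious, and the norm estimates follow from $|\f(ax)|\le\|\f\|\,\|a\|\,\|x\|$ together with the analogous bound for $R_a$. For (iii) one simply unfolds the definitions and uses associativity in $\m$. For (iv) one uses in addition that the involution on $\m$ is anti-multiplicative, e.g.\ $(R_{a^*}A\f)(x)=\overline{\f((xa^*)^*)}=\overline{\f(ax^*)}=(AL_a\f)(x)$, and the second identity is entirely symmetric.

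For (v) one invokes \cite[Theorem II.2.6]{takesaki}: $\m_*$ coincides with the subspace of ultraweakly continuous functionals on $\m$. Since the involution and left/right multiplication by a fixed element of $\m$ are themselves ultraweakly continuous operations on $\m$, pre-composing any $\f\in\m_*$ with them preserves ultraweak continuity, so $A\f$, $L_a\f$, $R_a\f\in\m_*$; invariance under $S$ then follows by linearity. The only real subtlety anywhere in the lemma is the bookkeeping for the conjugate-linearity of $A$ in (i) and (iv) and the order of multiplication in (iii); both are routine once one tracks carefully where the conjugation and the involution land, and no serious obstacle arises.
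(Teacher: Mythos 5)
Your proposal is correct and follows essentially the same route as the paper, which simply declares (i)--(iii) trivial, verifies the first identity of (iv) by exactly the computation you give (insert $x^*$, use anti-multiplicativity of the involution, and unwind $A$), and disposes of (v) by citing the standard fact that pre-composition with the involution and with left/right multiplication preserves normality (the paper cites \cite[Theorem 1.7.8]{S}, you invoke the equivalent characterization of $\m_*$ as the ultraweakly continuous functionals). Your additional details for (i)--(ii) (the state realizing $\|S\|=1$, the bound $|\f(ax)|\le\|\f\|\,\|a\|\,\|x\|$) are accurate fillers for what the paper leaves unsaid.
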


\begin{proof} The assertions (i)--(iii) are trivial. Let us prove the first equality from assertion (iv). So, for any $a\in\m$, $\f\in\m^*$ and $x\in\m$ we have
$$AL_a\f(x)= \overline{L_a\f(x^*)}=\overline{\f(ax^*)}=\overline{\f((xa^*)^*)}=A\f (xa^*)=R_{a^*}A\f(x).$$
The second equality is analogous. 

Finally, the assertion (v) follows directly from \cite[Theorem 1.7.8]{S}.
\end{proof}   

An element $p$ of a \vNa{} $\m$ is said to be a \emph{projection} if $p=p^*$ and $p^2=p$. It is the case if and only if $p$ is an orthogonal projection. If $p\in\m$ is a projection, then the operators $L_p$ and $R_p$ are clearly linear projections of norm one.

 Following \cite[Definition 5.5.8]{KR1} we call a projection $p\in\m$ \emph{cyclic} if there is $\xi\in H$ such that $\m'\xi=\{a\xi:a\in\m'\}$ is dense in $pH$. Such a vector $\xi$ is then said to be a \emph{generating vector for $p$}.	

\begin{lem}\label{L:cyc1} Let $\m$ be a \vNa{} and $p\in\m$ be a cyclic projection with generating vector $\xi$.
If $x\in\m$ is such that $x\xi=0$, then $xp=0$. \end{lem}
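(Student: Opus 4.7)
The plan is to exploit the defining property of cyclic projections together with the fact that elements of $\m$ commute with elements of the commutant $\m'$.

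First I would observe that the generating vector $\xi$ itself lies in $pH$. Indeed, since the identity operator belongs to $\m'$, the vector $\xi=1\cdot\xi$ belongs to $\m'\xi$, and since $\m'\xi\subset pH$ with $pH$ closed in $H$, we get $\xi\in pH$, i.e.\ $p\xi=\xi$. (This step is not strictly needed for the argument below but clarifies the setup.)

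Next, the key observation is that for every $a\in\m'$ we have $xa=ax$, because $x\in\m$ and $a\in\m'$. Consequently, for every $a\in\m'$,
$$x(a\xi)=a(x\xi)=a\cdot 0 = 0.$$
Thus $x$ vanishes on the subset $\m'\xi$ of $pH$. Since $\m'\xi$ is dense in $pH$ by the definition of a cyclic projection, and $x$ is continuous, it follows that $x$ vanishes on all of $pH$.

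Finally, for any $\eta\in H$ the vector $p\eta$ lies in $pH$, so $xp\eta=x(p\eta)=0$. This yields $xp=0$, as required. The argument is essentially a one-line density/commutation computation, so I do not anticipate any real obstacle; the only subtlety worth noting is that the commutation $xa=ax$ for $x\in\m$ and $a\in\m'$ is built into the very definition of the commutant and therefore requires no justification beyond citation.
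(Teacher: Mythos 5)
Your proof is correct and is essentially identical to the paper's argument: both use the commutation $xa\xi = ax\xi = 0$ for $a\in\m'$ together with the density of $\m'\xi$ in $pH$ and continuity of $x$. No issues.
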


\begin{proof}
 For any $a\in \m'$ we have $0=ax\xi=xa\xi$.
Since $\m'\xi$ is dense in $pH$, we get that $x|_{pH}=0$, i.e., $xp=0$.
\end{proof}

\begin{lem}\label{L:wcg1} Let $\m$ be a \vNa{} and $p\in\m$ be a cyclic projection. Then the spaces $L_p\m_*$ and $R_p\m_*$ are
weakly compactly generated.
\end{lem}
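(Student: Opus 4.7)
The plan is to exhibit $L_p\m_*$ and $R_p\m_*$ as images of bounded real-linear operators from $H$ with dense range; since $H$ is WCG by Proposition~\ref{P:wcg}(i), Proposition~\ref{P:wcg}(iii) will then deliver the conclusion.

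First I would record two routine identities for the vector functionals $\omega_{\eta,\zeta}(x)=\langle x\eta,\zeta\rangle$, namely $L_p\omega_{\eta,\zeta}=\omega_{\eta,p\zeta}$ and $R_p\omega_{\eta,\zeta}=\omega_{p\eta,\zeta}$. Because the linear span of vector functionals is norm-dense in $\m_*$ (a standard property of preduals of von Neumann algebras) and $L_p$ is continuous by Lemma~\ref{L:oper}(ii), this shows that $L_p\m_*$ equals the norm-closed linear span of $\{\omega_{\eta,\zeta'}:\eta\in H,\ \zeta'\in pH\}$, and analogously for $R_p\m_*$.

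The key reduction uses the generating vector $\xi$. Since $1\in\m'$ and $\m'\xi\subset pH$, we have $\xi\in pH$, hence $p\xi=\xi$. For $a\in\m'$ the computation $\omega_{\eta,a\xi}(x)=\langle x\eta,a\xi\rangle=\langle a^*x\eta,\xi\rangle=\langle xa^*\eta,\xi\rangle$ gives $\omega_{\eta,a\xi}=\omega_{a^*\eta,\xi}$. Combined with the density of $\m'\xi$ in $pH$ and the bound $\|\omega_{\eta,\zeta}\|\le\|\eta\|\,\|\zeta\|$, this shows that $\{\omega_{\eta,\xi}:\eta\in H\}$ has norm-dense linear span in $L_p\m_*$. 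The map $T\colon H\to\m_*$ given by $T\eta=\omega_{\eta,\xi}$ is bounded linear and takes values in $L_p\m_*$ (since $p\xi=\xi$ forces $L_p\omega_{\eta,\xi}=\omega_{\eta,\xi}$), so Proposition~\ref{P:wcg}(iii) yields that $L_p\m_*$ is WCG.

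For $R_p\m_*$ the argument is symmetric: the analogous identity $\omega_{a\xi,\zeta}=\omega_{\xi,a^*\zeta}$ reduces the density problem to $\{\omega_{\xi,\zeta}:\zeta\in H\}$, and $\zeta\mapsto\omega_{\xi,\zeta}$ is a bounded conjugate-linear (hence real-linear) map from $H$ onto a dense subspace of $R_p\m_*$. I do not anticipate any serious obstacle; the one slightly subtle point is the norm-density of vector functionals in $\m_*$, which is standard, and the observation that the generating vector automatically lies in $pH$.
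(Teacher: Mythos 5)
Your argument is correct, and it shares the paper's overall strategy (realize $L_p\m_*$ and $R_p\m_*$ as dense real-linear continuous images of a Hilbert space and invoke Proposition~\ref{P:wcg}(i),(iii)), but the implementation is genuinely different. The paper maps $\m$ into $L_p\m_*$ by $a\mapsto R_a\omega$ with $\omega=\omega_{\xi,\xi}$, factors this map through the GNS Hilbert space of $\omega$ via the estimate $\|R_a\omega\|\le\omega(a^*a)^{1/2}$, and proves density of the range by a Hahn--Banach duality argument that reduces to $x^*\xi=0\Rightarrow x^*p=0$ (Lemma~\ref{L:cyc1}); the case of $R_p$ is then handled via the isometry $A$. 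You instead work with all vector functionals $\omega_{\eta,\xi}$, $\eta\in H$ (note that your map satisfies $T(a\xi)=R_a\omega$, so the paper's range is the restriction of yours to $\m\xi$), and you prove density directly from two facts: the norm-density of the span of vector functionals in $\m_*$ together with continuity of the projection $L_p$, and the density of $\m'\xi$ in $pH$ combined with the identity $\omega_{\eta,a\xi}=\omega_{a^*\eta,\xi}$ for $a\in\m'$. This trades the paper's self-contained GNS/Hahn--Banach step for an external (standard but not entirely trivial) structural fact about preduals, and it dispenses with both Lemma~\ref{L:cyc1} and the operator $A$ in this proof; it also yields the $R_p$ case by a clean symmetric argument using a conjugate-linear (hence real-linear, as permitted by Proposition~\ref{P:wcg}(iii)) map. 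All the individual identities you record ($L_p\omega_{\eta,\zeta}=\omega_{\eta,p\zeta}$, $p\xi=\xi$, $\omega_{\eta,a\xi}=\omega_{a^*\eta,\xi}$, $\|\omega_{\eta,\zeta}\|\le\|\eta\|\,\|\zeta\|$) are correct, so the proof goes through.
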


\begin{proof} We will prove the statement for $L_p$. Note that $L_p$ is a linear projection of norm one.
Fix a generating vector $\xi\in H$ for $p$ and  define $\omega(x)=\langle x\xi,\xi\rangle$ for $x\in\m$. Then clearly $\omega\in\m_*$ and, moreover, $\omega\in L_p\m_*$. Indeed, 
$$L_p\omega (x)=\omega(px)=\langle px\xi,\xi\rangle=\langle x\xi,\xi\rangle=\omega(x),$$ where we used that $p^*=p$ and $p\xi=\xi$. 

Further, for $a,b\in M$ set $[[a,b]]=\omega(b^*a)$, the semi-inner product from the GNS construction. Let $H_\xi$ be the resulting Hilbert space (after factorization and completion). Due to Proposition~\ref{P:wcg}(iii), to show that $L_p\m_*$ is WCG it is enough to prove that there is 
a bounded linear mapping $T:H_\xi\to L_p\m_*$ with dense range. To prove that it suffices to construct a linear map $\Phi:\m\to L_p\m_*$ with dense range and such that
$\|\Phi(a)\|\le [[a,a]]^{1/2}$ for $a\in\m$.

The operator $\Phi$ will be defined by the formula $$\Phi(a)=R_a\omega,\quad a\in\m.$$
Then $\Phi(a)\in\m_*$ for any $a\in\m$. Moreover, $\Phi(a)\in L_p\m_*$. Indeed, 
$$L_p\Phi(a)=L_pR_a\omega=R_aL_p\omega=R_a\omega=\Phi(a).$$
It is hence clear that $\Phi$ is a linear mapping from $\m$ to $L_p\m_*$.
Further, for any $a,x\in\m$ we have
$$|\Phi(a)(x)|^2=|R_a\omega(x)|^2=|\omega(xa)|^2\le|\omega(xx^*)|\cdot|\omega(a^*a)|
\le\|x\|^2\cdot[[a,a]].$$ Hence $\|\Phi(a)\|\le [[a,a]]^{1/2}$.

It remains to show that the range of $\Phi$ is dense in $L_p\m_*$. We use Hahn-Banach theorem. Suppose that $x\in \m$ is such that $x$ restricted to the range of $\Phi$ is zero. It means that for each $a\in \m$ we have $$0=\Phi(a)(x)=R_a\omega(x)=\omega(xa)=\langle
xa\xi,\xi\rangle.$$ In particular, by setting $a=x^*$ we get $$0=\langle xx^*\xi,\xi\rangle=\langle x^*\xi,x^*\xi\rangle=\|x^*\xi\|^2.$$
Hence $x^*\xi=0$, so by Lemma~\ref{L:cyc1} $x^*p=0$, hence $px=(x^*p)^*=0$. Hence, given any $\f\in L_p\m_*$ we have
$$\f(x)=L_p\f(x)=\f(px)=0.$$ Hence $x$ restricted to $L_p\m_*$ is zero. This completes the proof.

The proof that $R_p\m_*$ is WCG is analogous. Or, alternatively, it follows using Proposition~\ref{P:wcg}(iii) from the fact that the operator $A$ is a real-linear isometry which maps $L_p\m_*$ onto $R_p\m_*$. Indeed, for any $\f\in L_p\m_*$ we have
$$R_p A\f=A L_p\f=A\f,$$
hence $A\f\in R_p\m_*$ and, similarly, $A\f\in L_p\m_*$ whenever $\f\in R_p\m_*$.
\end{proof}
 
We will use the following known result several times.

\begin{pr}[\cite{KR1}, Proposition 5.5.9]\label{P:rozklad} Let $\m$ be a \vNa{} and $q\in\m$ be a projection. Then there is a family $(p_\lambda)_{\lambda\in\Lambda}$ of mutually orthogonal cyclic projection such that $\sum_{\lambda\in\Lambda}p_\lambda=q$. In particular, there is such a family with sum equal to $1$ (the unit of $\m$).
\end{pr}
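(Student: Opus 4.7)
The plan is to construct the family by a standard Zorn's lemma argument. Consider the collection $\mathcal{P}$ of all families $(p_\lambda)_{\lambda\in\Lambda}$ of mutually orthogonal cyclic projections in $\m$ satisfying $\sum_{\lambda\in\Lambda} p_\lambda \le q$, ordered by inclusion. Every chain has an upper bound (its union), so Zorn's lemma yields a maximal element $(p_\lambda)_{\lambda\in\Lambda}$. I claim its sum equals $q$: denoting $r = q - \sum_{\lambda\in\Lambda} p_\lambda$, if $r$ were nonzero one could produce a nonzero cyclic projection $p \in \m$ with $p \le r$, and since such a $p$ would be orthogonal to every $p_\lambda$, adjoining it to the family would contradict maximality. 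The ``in particular'' clause then follows by applying the statement to $q = 1$, the unit of $\m$.

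The real content is therefore the following sublemma: every nonzero projection $r \in \m$ dominates a nonzero cyclic subprojection. To prove it, pick any nonzero vector $\xi \in rH$ and set $K = \overline{\m'\xi}$. Since $\m'$ is a $\ast$-subalgebra of $\B(H)$, the subspace $K$ is invariant under both $a$ and $a^*$ for every $a \in \m'$; equivalently, $K$ reduces every $a \in \m'$, so the orthogonal projection $p$ onto $K$ commutes with every element of $\m'$. By the double commutant theorem this gives $p \in (\m')' = \m'' = \m$. Because the identity of $\B(H)$ belongs to $\m'$, we have $\xi \in \m'\xi \subset K$, so $p \ne 0$ and $\xi$ is a generating vector for $p$; hence $p$ is a cyclic projection in $\m$. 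Finally, $r \in \m$ commutes with $\m'$, so $rH$ is $\m'$-invariant, and $\xi \in rH$ forces $K \subset rH$, whence $p \le r$.

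I expect the only genuine subtlety to be the invocation of the double commutant theorem to place the projection onto $\overline{\m'\xi}$ inside $\m$, together with the elementary observation that a closed subspace invariant under a $\ast$-closed family of operators reduces each of them. Once that is available, the rest is routine bookkeeping in the lattice of projections of $\m$ (orthogonality of the new projection to the existing family, the convergence of the sum in the strong operator topology, and Zorn's lemma). This matches the fact that the result is cited as a classical foundational statement from Kadison and Ringrose rather than proved in the present paper.
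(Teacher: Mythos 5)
Your proof is correct and is essentially the classical argument from Kadison--Ringrose (Proposition 5.5.9), which the paper cites without reproving: a Zorn's lemma maximality argument reduced to the sublemma that every nonzero projection $r\in\m$ dominates a nonzero cyclic projection, obtained as the projection onto $\overline{\m'\xi}$ for $\xi\in rH$ nonzero, which lies in $\m''=\m$. All the details you flag (the double commutant step, orthogonality to the existing family, and SOT convergence of the sum) check out, so there is nothing to add.
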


\begin{lem}\label{L:spocetnost}
Let $(p_\lambda)_{\lambda\in\Lambda}$ be a family of mutually orthogonal cyclic projection in $\m$. Then for each $x\in\m$ and $\lambda\in\Lambda$ the sets
$$ \{\mu\in \Lambda : p_\lambda x p_\mu\ne0\}\mbox{ and } \{\mu\in \Lambda : p_\mu x p_\lambda\ne0\}$$
are countable.
\end{lem}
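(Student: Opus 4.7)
The plan is to reduce both sets to a question about a single vector in $H$ being annihilated by $p_\mu$, and then to use the Bessel-type inequality associated with mutually orthogonal projections.

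First I would handle the second set, $\{\mu \in \Lambda : p_\mu x p_\lambda \ne 0\}$. Since $p_\lambda$ is cyclic, I fix a generating vector $\xi_\lambda$ for $p_\lambda$ (so $\m'\xi_\lambda$ is dense in $p_\lambda H$ and in particular $p_\lambda \xi_\lambda = \xi_\lambda$). Applying \lemref{L:cyc1} with the operator $y = p_\mu x \in \m$: the implication ``$y\xi_\lambda = 0 \Rightarrow yp_\lambda = 0$'' together with the trivial converse (since $p_\lambda \xi_\lambda = \xi_\lambda$) yields the equivalence
$$
p_\mu x p_\lambda \ne 0 \iff p_\mu (x\xi_\lambda) \ne 0.
$$
So the problem is reduced to showing that for a single vector $\eta = x\xi_\lambda \in H$, only countably many of the vectors $p_\mu \eta$ are nonzero.

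This last fact is a standard Bessel-type argument: because the projections $p_\mu$ are mutually orthogonal, the vectors $\{p_\mu \eta\}_{\mu \in \Lambda}$ are pairwise orthogonal in $H$, so for any finite subset $F\subset\Lambda$,
$$
\sum_{\mu \in F} \|p_\mu \eta\|^2 = \Bigl\| \sum_{\mu \in F} p_\mu \eta \Bigr\|^2 \le \|\eta\|^2.
$$
Taking the supremum over finite $F$ gives $\sum_{\mu\in\Lambda}\|p_\mu\eta\|^2 \le \|\eta\|^2 < \infty$, which forces $\|p_\mu\eta\| \ne 0$ for at most countably many $\mu$.

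For the first set $\{\mu \in \Lambda : p_\lambda x p_\mu \ne 0\}$ I would simply take adjoints: $p_\lambda x p_\mu = 0$ if and only if its adjoint $p_\mu x^* p_\lambda$ vanishes, so this set coincides with $\{\mu \in \Lambda : p_\mu x^* p_\lambda \ne 0\}$ and is countable by the case already treated (applied to $x^*$ in place of $x$). There is no real obstacle here; the one point worth double-checking is that Lemma~\ref{L:cyc1} is being used with the cyclic projection $p_\lambda$ on the \emph{right}, which is exactly the form in which it is stated, so the adjoint maneuver is needed precisely to bring the first set into that form.
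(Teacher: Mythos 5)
Your proof is correct, but it runs in the opposite direction from the paper's and is in fact somewhat slicker. The paper reduces the \emph{second} set to the \emph{first} via the adjoint identity $(p_\mu x p_\lambda)^*=p_\lambda x^*p_\mu$, and then proves countability of $\{\mu: p_\lambda x p_\mu\ne0\}$ directly by contradiction: using density of $\m'\xi_\lambda$ in $p_\lambda H$ it produces unit vectors $\theta_\mu\in p_\mu H$ with $\langle p_\lambda x\theta_\mu,\xi_\lambda\rangle>\delta$ for uncountably many $\mu$, and then derives the contradiction $n\delta\le\|p_\lambda x\|\sqrt{n}$ from the orthogonality of the $\theta_\mu$. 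You instead reduce the first set to the second and observe, via Lemma~\ref{L:cyc1} applied to $y=p_\mu x$ (together with $p_\lambda\xi_\lambda=\xi_\lambda$, which holds since $1\in\m'$ so $\xi_\lambda\in\overline{\m'\xi_\lambda}=p_\lambda H$), that $p_\mu x p_\lambda\ne0$ iff $p_\mu(x\xi_\lambda)\ne0$; then Bessel's inequality for the pairwise orthogonal vectors $p_\mu(x\xi_\lambda)$ finishes it. This is the same underlying phenomenon --- the paper's $n\delta\le C\sqrt n$ estimate is a hand-rolled almost-orthogonality bound of exactly Bessel type --- but your reduction to a single vector makes the quantitative step a one-liner and avoids both the uncountable pigeonhole and the explicit use of the commutant beyond what is already packaged in Lemma~\ref{L:cyc1}. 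Both arguments use the same two ingredients (the generating vector of $p_\lambda$ and mutual orthogonality of the $p_\mu$); yours just organizes them more economically.
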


\begin{proof} Since $(p_\mu x p_\lambda)^*=p_\lambda x^* p_\mu$, it is enough to prove that the first set is countable for each $x\in \m$ and each $\lambda\in\Lambda$. So, fix $x\in\m$ and $\lambda\in\Lambda$. Let $\xi_\lambda$ be a generating vector for $p_\lambda$ such that $\|\xi_\lambda\|=1$. Suppose that
$A=\{\mu\in\Lambda: p_\lambda x p_\mu\ne 0\}$ is uncountable.
Let $\mu\in A$ be arbitrary, then there is $\eta_\mu\in p_\mu H$ such that $p_\lambda x \eta_\mu\ne 0$. Since this vector belongs to $p_\lambda H$ and $\m'\xi_\lambda$ is dense in $p_\lambda H$, there is $a_\mu\in \m'$ with $\langle p_\lambda x \eta_\mu,a_\mu \xi_\lambda\rangle\ne 0$. Hence
$$0\ne \langle p_\lambda x \eta_\mu,a_\mu \xi_\lambda\rangle = \langle a_\mu^* p_\lambda x \eta_\mu,\xi_\lambda\rangle
=\langle p_\lambda x  a_\mu^*\eta_\mu,\xi_\lambda\rangle.$$
Since $a_\mu^*\eta_\mu\in p_\mu H$ (as $p_\mu H$ is invariant for any element of $\m'$) and it is a nonzero vector, 
one can find $\theta_\mu\in p_\mu H$ such that $\|\theta_\mu\|=1$ and $\langle p_\lambda x  \theta_\mu,\xi_\lambda\rangle>0$.
Hence there is $\delta>0$ such that
$$A_1=\{\mu\in A: \langle p_\lambda x  \theta_\mu,\xi_\lambda\rangle>\delta \}$$
is uncountable. Let $n\in\N$ be arbitrary and $\mu_1,\dots,\mu_n\in A_1$ be distinct. Then
$$n\delta\le\langle p_\lambda x(\sum_{j=1}^n \theta_{\mu_j}),\xi_\lambda \rangle
\le \|p_\lambda x\|\cdot\|\sum_{j=1}^n \theta_{\mu_j}\|=\|p_\lambda x\|\cdot\sqrt n.$$
Since $n\in\N$ is arbitrary, it is a contradiction completing the proof.
\end{proof}

A projection $q\in\m$ is called \emph{$\sigma$-finite} if the algebra $q\m q$ is $\sigma$-finite, i.e., if any orthogonal family of projections smaller that $q$ is countable. (In \cite{KR1} such projections are called \emph{countably decomposable}.)

\begin{pr}\label{P:rozkop}
Let $x\in\m$. Then there is an orthogonal family of $\sigma$-finite projections $(q_j)_{j\in J}$ such that
$$x=\sum_{j\in J} q_j x q_j$$
in the strong operator topology.
\end{pr}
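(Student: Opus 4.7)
The plan is to build the $\sigma$-finite projections by clumping together orthogonal cyclic projections according to the ``block-matrix'' pattern of $x$.

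First, apply Proposition~\ref{P:rozklad} to the unit $1\in\m$ to obtain a family $(p_\lambda)_{\lambda\in\Lambda}$ of mutually orthogonal cyclic projections with $\sum_{\lambda\in\Lambda}p_\lambda=1$ (SOT). Note that every cyclic projection is $\sigma$-finite: if $\xi$ is a generating vector for $p$, then $\xi$ is cyclic for $\m'$ acting on $pH$, and therefore separating for its commutant, namely $p\m p$ acting on $pH$; the functional $y\mapsto\langle y\xi,\xi\rangle$ is then a faithful normal state on $p\m p$, forcing $p\m p$ to be $\sigma$-finite.

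Next, define a symmetric relation $\sim_1$ on $\Lambda$ by
$$\lambda\sim_1\mu\quad\Longleftrightarrow\quad p_\lambda x p_\mu\ne 0\ \text{ or }\ p_\mu x p_\lambda\ne 0,$$
and let $\sim$ be the equivalence relation on $\Lambda$ generated by $\sim_1$ (together with equality). By Lemma~\ref{L:spocetnost}, each $\lambda$ has only countably many $\sim_1$-neighbours, so an easy induction shows that each $\sim$-equivalence class is countable. Enumerate the equivalence classes as $(C_j)_{j\in J}$ and set
$$q_j=\sum_{\lambda\in C_j}p_\lambda\quad(\text{SOT}),\qquad j\in J.$$
The projections $q_j$ are mutually orthogonal and satisfy $\sum_{j\in J}q_j=1$ in SOT. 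Moreover, each $q_j$ is $\sigma$-finite: pick for every $\lambda\in C_j$ a faithful normal state $\omega_\lambda$ on $p_\lambda\m p_\lambda$ and (using that $C_j$ is countable) form a faithful normal state on $q_j\m q_j$ by a convex combination of the $\omega_\lambda\circ(y\mapsto p_\lambda y p_\lambda)$ with strictly positive weights summing to $1$; faithfulness follows because $\omega(y^*y)=0$ forces $p_\lambda y^*y p_\lambda=0$, hence $y p_\lambda=0$ for every $\lambda\in C_j$, whence $y q_j=0$ and so $y=q_j y q_j=0$.

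It remains to verify the decomposition formula. By the construction of $\sim$, whenever $j\ne k$ and $\lambda\in C_j$, $\mu\in C_k$, we have $\lambda\not\sim_1\mu$, hence $p_\lambda x p_\mu=0$. Separate strong-operator continuity of multiplication by a bounded operator on bounded nets, applied to the identity $q_j=\sum_{\lambda\in C_j}p_\lambda$, then gives $q_j x q_k=0$ for $j\ne k$. Consequently,
$$x = 1\cdot x\cdot 1 = \Bigl(\sum_{j\in J}q_j\Bigr)x\Bigl(\sum_{k\in J}q_k\Bigr)=\sum_{j,k\in J}q_j x q_k=\sum_{j\in J}q_j x q_j$$
in the strong operator topology, as required. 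The main technical point to watch is the $\sigma$-finiteness of a countable sum of $\sigma$-finite projections (handled above by assembling a faithful normal state); everything else is a fairly direct combination of Proposition~\ref{P:rozklad} and Lemma~\ref{L:spocetnost}.
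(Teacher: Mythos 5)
Your proof is correct and follows essentially the same route as the paper: decompose the unit into orthogonal cyclic projections, group them into countable equivalence classes generated by the relation $p_\lambda x p_\mu\ne0$ or $p_\mu x p_\lambda\ne0$ (countability via Lemma~\ref{L:spocetnost}), and sum each class to a $\sigma$-finite projection. The only difference is that you prove from scratch that cyclic projections and countable sums of $\sigma$-finite projections are $\sigma$-finite, where the paper simply cites \cite[Proposition 5.5.19]{KR1}.
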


\begin{proof}
Let  $(p_\lambda)_{\lambda\in\Lambda}$ be a family of mutually orthogonal cyclic projection in $\m$ with sum equal to $1$
provided by Proposition~\ref{P:rozklad}. For any $\lambda\in\Lambda$ let 
$$A_1(\lambda)=\{\lambda\}\cup\{\mu\in \Lambda: p_\lambda x p_\mu\ne 0\mbox{ or }p_\mu x p_\lambda\ne 0\}.$$
By Lemma~\ref{L:spocetnost} this set is countable. Further, define for $n\in\N$ by induction sets
$$A_{n+1}(\lambda)=A_n(\lambda)\cup\bigcup\{A_1(\mu):\mu\in A_n(\lambda)\}$$
and, finally,
$$A(\lambda)=\bigcup_{n\in\N} A_n(\lambda).$$
Then $A(\lambda)$ is countable. Moreover, $\lambda\in A(\lambda)$ and for $\lambda_1,\lambda_2\in \Lambda$ either $A(\lambda_1)=A(\lambda_2)$ or $A(\lambda_1)\cap A(\lambda_2)=\emptyset$. Let us introduce on $\Lambda$ the equivalence
$\lambda_1\sim\lambda_2$ if $A(\lambda_1)=A(\lambda_2)$ and let $J$ be the set of all the equivalence classes. For $j\in J$ fix $\lambda\in j$ and set $q_j=\sum_{\mu\in A(\lambda)} p_\mu$. Then $(q_j)_{j\in J}$ is a family of mutually orthogonal projections with sum equal to $1$. Moreover, each $q_j$ is $\sigma$-finite by \cite[Proposition 5.5.19]{KR1}. Hence
$x=\sum_{j\in J} q_j x$. Further, $q_j x=q_j x q_j$ by the construction. This completes the proof.
\end{proof}

\section{Proofs of the main results}

In this section we give the proofs of Theorems~\ref{T:main1} and~\ref{T:main2} using the results of the previous two sections. 

\begin{proof}[Proof of Theorem~\ref{T:main1}.] 
Let $\m$ be any \vNa. By Proposition~\ref{P:rozklad} there is a family $(p_\lambda)_{\lambda\in\Lambda}$ of mutually orthogonal cyclic projections with sum equal to $1$ (the unit of $\m$). By Lemma~\ref{L:wcg1} we know that $L_{p_\lambda}\m_*$ is WCG for each $\lambda\in\Lambda$. We claim that $\m_*$ is the $1$-unconditional sum of the family   $L_{p_\lambda}\m_*$, $\lambda\in\Lambda$. This fact will be proved in three steps:

1. If $\lambda\ne\mu$, then $L_{p_\lambda}\m_*\cap L_{p_\mu}\m_*=\{0\}$. Indeed, if $\f$ is in the intersection, then
$$\f=L_{p_\lambda}\f=L_{p_\lambda}L_{p_\mu}\f=0.$$

2. Let $F_1$ and $F_2$ be finite subsets of $\Lambda$ such that $F_1\subset F_2$ and $\omega_\lambda\in L_{p_\lambda}\m_*$ for $\lambda\in F_2$. Then
$$\begin{aligned}\left\|\sum_{\lambda\in F_1} \omega_\lambda\right\| 
&= \left\|\sum_{\lambda\in F_1} L_{p_\lambda}\left(\sum_{\mu\in F_2} \omega_\mu\right) \right\|
=\left\| \left(\sum_{\lambda\in F_1} L_{p_\lambda}\right)\left(\sum_{\mu\in F_2} \omega_\mu\right) \right\|
\\&=\left\| L_{\sum_{\lambda\in F_1} p_\lambda}\left(\sum_{\mu\in F_2} \omega_\mu\right) \right\|
\le\left\| L_{\sum_{\lambda\in F_1} p_\lambda}\right\|\cdot\left\|\sum_{\mu\in F_2} \omega_\mu\right\|
=\left\|\sum_{\mu\in F_2} \omega_\mu \right\|.\end{aligned}$$

3. The linear span of $\bigcup_{\lambda\in\Lambda}L_{p_\lambda}\m_*$ is dense in $\m_*$. This follows from the Hahn-Banach theorem since, given any nonzero element $x\in\m$, we can find $\lambda\in\Lambda$ such that $p_\lambda x\ne0$ and hence there is $\omega\in\m_*$ with $\omega(p_\lambda x)\ne0$. Then $L_{p_\lambda}\omega(x)=\omega(p_\lambda x)\ne 0$.

Hence, being a $1$-unconditional sum of WCG spaces, $\m_*$ is $1$-Plichko by Proposition~\ref{P:wcg}(v) and Proposition~\ref{P:1-uncond}. Further, if $\m$ is $\sigma$-finite, then $\Lambda$ is countable and hence $\m_*$ is WCG by Proposition~\ref{P:wcg}(iv). 

Finally, suppose that $\m$ is not $\sigma$-finite. Then the index set $\Lambda$ is uncountable due to \cite[Proposition 5.5.19]{KR1}. For each $\lambda\in\Lambda$ fix a unit vector $\xi_\lambda\in p_\lambda H$ and define
$$\omega_\lambda(x)=\langle x\xi_\lambda,\xi_\lambda\rangle, \quad x\in\m.$$
Then $\omega_\lambda\in L_{p_\lambda}\m_*$ (see the beginning of the proof of Lemma~\ref{L:wcg1}) and clearly $\|\omega_\lambda\|=1$ (the norm is attained at $p_\lambda$). For any finite set $F\subset\Lambda$ and any choice of scalars
$c_\lambda$, $\lambda\in F$, we have
$$\|\sum_{\lambda\in F} c_\lambda\omega_\lambda\|=\sum_{\lambda\in F}|c_\lambda|.$$
Indeed, the inequality ``$\le$'' follows from the triangle inequality. To prove the converse fix complex units $\alpha_\lambda$ such that $\alpha_\lambda c_\lambda=|c_\lambda|$ and set $x=\sum_{\lambda\in F}\alpha_\lambda p_\lambda$. Then $x\in\m$, $\|x\|=1$ and
$$\left(\sum_{\lambda\in F} c_\lambda\omega_\lambda\right)(x)=\sum_{\lambda\in F} c_\lambda\omega_\lambda(x)
=\sum_{\lambda\in F} c_\lambda\langle x\xi_\lambda,\xi_\lambda\rangle =\sum_{\lambda\in F}c_\lambda\alpha_\lambda
=\sum_{\lambda\in F}|c_\lambda|.$$
Hence, $\m_*$ contains an isomorphic copy of $\ell_1(\Lambda)$ and thus it is not WLD. (Indeed, $\ell_1(\Lambda)$ is not WLD 
and WLD spaces are stable to taking closed subspaces \cite[Example 4.39]{survey}.)
\end{proof}

The following proposition provides an explicit description of a $1$-norming $\Sigma$-subspace of $\m=(\m_*)^*$. It provides a better insight to the structure of $\m_*$ and, moreover, it will be used in the proof of Theorem~\ref{T:main2}.

\begin{pr}\label{P:Sigma}  
Let $\m$ be a \vNa{} and $(p_\lambda)_{\lambda\in\Lambda}$ be a family of mutually orthogonal cyclic projections with sum equal to $1$. Then
\begin{equation}\label{eq:D}
\begin{aligned}D&=\{x\in \m: \{\lambda\in\Lambda: p_\lambda x\ne 0\}\mbox{ is countable}\}\\&=\{x\in \m: \{\lambda\in\Lambda: xp_\lambda \ne 0\}\mbox{ is countable}\}\end{aligned}\end{equation}
is a $1$-norming $\Sigma$-subspace of $\m=(\m_*)^*$. Moreover, $D$ is a $*$-subalgebra and a two-sided ideal in $\m$ and it can be expressed as
\begin{equation}\label{eq:1}\begin{aligned}
D&=\{x\in\m : \exists q\in\m\mbox{ a $\sigma$-finite projection such that }x=qx \}
\\&=\{x\in\m : \exists q\in\m\mbox{ a $\sigma$-finite projection such that }x=xq \}
\\&=\{x\in\m : \exists q\in\m\mbox{ a $\sigma$-finite projection such that }x=qxq \},
\end{aligned}\end{equation}
hence it does not depend on the concrete choice of the system $(p_\lambda)_{\lambda\in\Lambda}$.
\end{pr}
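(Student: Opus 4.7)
My plan is to prove the four distinct assertions of the proposition (equality of the two descriptions in \eqref{eq:D}; that $D$ is a $1$-norming $\Sigma$-subspace; the algebraic structure; and the equivalent descriptions \eqref{eq:1}) in the order stated. The equality of the two descriptions in \eqref{eq:D} is a direct consequence of \lemref{L:spocetnost}: if $\{\lambda : p_\lambda x\ne 0\}$ is countable, then since $xp_\mu=\sum_\lambda p_\lambda x p_\mu$ in the strong operator topology and $p_\lambda x p_\mu\ne 0$ forces $p_\lambda x\ne 0$, the set $\{\mu : xp_\mu\ne 0\}$ is contained in $\bigcup_{\lambda : p_\lambda x\ne 0}\{\mu : p_\lambda x p_\mu\ne 0\}$, a countable union of sets that are countable by \lemref{L:spocetnost}; the reverse containment follows by symmetry. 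To identify $D$ with a $1$-norming $\Sigma$-subspace, I invoke the $1$-unconditional decomposition $\m_*=\bigoplus_\lambda L_{p_\lambda}\m_*$ established in the proof of \theoref{T:main1}; by \prref{P:1-uncond} the set $\{x\in\m : \{\lambda : x|_{L_{p_\lambda}\m_*}\ne 0\}\text{ is countable}\}$ is such a subspace, and for $\omega\in L_{p_\lambda}\m_*$ we have $\omega(x)=\omega(p_\lambda x)$, so $x$ annihilates $L_{p_\lambda}\m_*$ exactly when $p_\lambda x=0$ (using that $\m_*$ separates $\m$).

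The algebraic claims fall out of the two-sided description of $D$. Closure under $*$ is immediate from $p_\lambda x=0\iff x^*p_\lambda=0$, which interchanges the two descriptions of $D$; for $x\in D$ and $a\in\m$, the relations $p_\lambda(xa)=(p_\lambda x)a$ and $(ax)p_\lambda=a(xp_\lambda)$ instantly give $xa,ax\in D$. To prove \eqref{eq:1}, let me write $E_1,E_2,E_3$ for the three sets on its right side. The inclusion $E_3\subseteq E_1\cap E_2$ is trivial, and for $D\subseteq E_3$, given $x\in D$ I set $C=\{\lambda : p_\lambda x\ne 0\}\cup\{\lambda : xp_\lambda\ne 0\}$, which is countable, and put $q=\sum_{\lambda\in C}p_\lambda$; as a countable orthogonal sum of cyclic (hence $\sigma$-finite) projections, $q$ is $\sigma$-finite by \cite[Proposition 5.5.19]{KR1}, and a direct computation gives $qx=x=xq$, hence $x=qxq\in E_3$.

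The main obstacle is the reverse direction $E_1\subseteq D$ (the case $E_2$ is analogous), and here I would argue as follows. Given a $\sigma$-finite projection $q\in\m$, it suffices to show that $\{\lambda : p_\lambda q\ne 0\}$ is countable; then for $x=qx$ we have $p_\lambda x=p_\lambda qx=0$ outside that set, so $x\in D$. Applying \prref{P:rozklad} to $q$ itself yields orthogonal cyclic projections $(r_n)_{n\in\N}$ below $q$ with $\sum_n r_n=q$, and the $\sigma$-finiteness of $q$ forces this family to be countable. For each $n$ pick a generating vector $\eta_n$ for $r_n$; from $\sum_\lambda p_\lambda=1$ (strong operator topology) and $\langle p_\lambda\eta_n,\eta_n\rangle=\|p_\lambda\eta_n\|^2$ I get $\sum_\lambda\|p_\lambda\eta_n\|^2=\|\eta_n\|^2<\infty$, whence $\{\lambda : p_\lambda\eta_n\ne 0\}$ is countable. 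Finally, the very argument of \lemref{L:cyc1} shows $p_\lambda\eta_n=0\Rightarrow p_\lambda r_n=0$: if $p_\lambda\eta_n=0$, then for any $a\in\m'$ we have $p_\lambda a\eta_n = a p_\lambda \eta_n = 0$, so $p_\lambda$ vanishes on the dense subspace $\m'\eta_n$ of $r_n H$. Hence $\{\lambda : p_\lambda r_n\ne 0\}$ is countable for every $n$, and so is their union, completing the proof.
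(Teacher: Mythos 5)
Your proof is correct and follows essentially the same route as the paper: the $1$-unconditional decomposition into the spaces $L_{p_\lambda}\m_*$ combined with Proposition~\ref{P:1-uncond}, Lemma~\ref{L:spocetnost} to identify the left and right descriptions, the ideal structure from the two-sided description, and the cyclic decomposition of a $\sigma$-finite projection with generating vectors plus Lemma~\ref{L:cyc1} for the key inclusion $E_1\subseteq D$. The only (cosmetic) difference is that you obtain $x=qxq$ directly from the union of the two countable index sets, whereas the paper proves $D=D_3$, passes to $D_4$ by adjoints, and handles $D_5$ via the join $q_1\vee q_2$ of two $\sigma$-finite projections.
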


\begin{proof} By the proof of Theorem~\ref{T:main1} the space $\m_*$ is the $1$-unconditional sum of WCG subspaces $L_{p_\lambda}\m_*$, $\lambda\in\Lambda$. Therefore, Proposition~\ref{P:1-uncond} yields that 
$$D_1=\{x\in \m: \{\lambda\in\Lambda: p_\lambda x\ne 0\}\mbox{ is countable}\}$$
is a $1$-norming $\Sigma$-subspace of $\m=(\m_*)^*$. Similarly, $\m_*$ is the $1$-unconditional sum of WCG subspaces $R_{p_\lambda}\m_*$, $\lambda\in\Lambda$, hence
$$D_2=\{x\in \m: \{\lambda\in\Lambda:x p_\lambda \ne 0\}\mbox{ is countable}\}$$
is also a $1$-norming $\Sigma$-subspace of $\m$. Moreover, $D_1=D_2$ by Lemma~\ref{L:spocetnost}, which completes the proof of the first part.

It is clear that $x^*\in D_2$ whenever $x\in D_1$. Further, if $x\in D_1$ and $a\in\m$, clearly $xa\in D_1$, hence $D_1$ is a right ideal. Similarly, $D_2$ is a left ideal. Since $D=D_1=D_2$ we conclude that $D$ is a $*$-subalgebra and a two-sided ideal in $\m$.

We continue by proving \eqref{eq:1}. Denote the sets appearing on the right-hand side consecutively $D_3,D_4,D_5$.
Let $x\in D=D_1$. Then $C=\{\lambda\in\Lambda:p_\lambda x\ne 0\}$ is countable and hence the projection $p_C=\sum_{\lambda\in C}p_\lambda$ is $\sigma$-finite by \cite[Proposition 5.5.19]{KR1}. Moreover, clearly $p_C x=x$, hence $x\in D_3$. This proves the inclusion $D\subset D_3$. 

To show the converse observe first that any $\sigma$-finite projection belongs to $D$. Indeed, suppose that $q\in\m$ is a $\sigma$-finite projection. By Proposition~\ref{P:rozklad} there is a sequence $(q_n)$ of mutually orthogonal cyclic projections such that $q=\sum_{n\in\N} q_n$. Let $\xi_n$ be a generating vector for $q_n$. If $\lambda\in\Lambda$ is such that $p_\lambda q\ne0$, then there is $n\in\N$ such that $p_\lambda q_n\ne 0$. By Lemma~\ref{L:cyc1} it follows that $p_\lambda \xi_n\ne 0$. Since the projections $p_\lambda$ are mutually orthogonal, for given $n\in\N$ there can be only countably many $\lambda$ with $p_\lambda\xi_n\ne0$. Therefore, $p_\lambda q\ne 0$ only for countably many $\lambda\in\Lambda$. In other words, $q\in D$. Since $D$ is an ideal, $qx\in D$ whenever $x\in\m$. It follows that $D_3\subset D$, hence $D=D_3$.

We continue by observing that $x\in D_4$ if and only if $x^*\in D_3$. Since $D_3=D$ and $D$ is a $*$-subalgebra, we infer $D=D_4$.

To complete the proof it is enough to observe that $D_3\cap D_4 = D_5$.
Indeed, the inclusion $\supset$ is obvious. To show the converse one, fix $x\in D_3\cap D_4$. Then $x=q_1x=xq_2$ for some $\sigma$-finite projections $q_1,q_2$. Let $q=q_1\vee q_2$ be the projection whose range is
the closed linear span of $q_1H\cup q_2H$. Then $q$ is $\sigma$-finite (cf. \cite[Exercise 5.7.45]{KR1}) and $x=qxq$,  hence $x\in D_5$. 
\end{proof}

The main part of Theorem~\ref{T:main2} follows from the following proposition.

\begin{pr}\label{P:main2} Let $\m$ be a \vNa{} and $\saM$ denote its self-adjoint part. The operator
$\Psi:\saM\to(\psaM)^*$ defined by
$$\Psi(x)(\omega)=\omega(x),\quad x\in\saM,\omega\in\psaM$$
is an onto isometry. Moreover, if we set
$$D_{sa}=\{x\in \saM : \exists q\in\m\mbox{ a $\sigma$-finite projection such that }x=qxq \},$$
then $\Psi(D_{sa})$ is a $1$-norming $\Sigma$-subspace of $(\psaM)^*$.
\end{pr}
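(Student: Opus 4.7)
There are three parts to verify: $\Psi$ is a surjective isometry, $\Psi(D_{sa})$ is $1$-norming, and $\Psi(D_{sa})$ is a $\Sigma$-subspace of $(\psaM)^*$. The central tool throughout is the real-linear direct-sum decomposition $\m_*=\psaM\oplus i\psaM$ provided by the projection $S$ of \lemref{L:oper}.

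For the isomorphism, isometry follows by symmetrising a near-norming functional: given $y\in\saM$ pick $\omega\in\m_*$ with $\|\omega\|\le 1$ and $|\omega(y)|$ close to $\|y\|$, multiply $\omega$ by a unimodular scalar so that $\omega(y)\in\rr$, and replace $\omega$ by $S\omega\in\psaM$; then $\|S\omega\|\le 1$ and $(S\omega)(y)=\Re\omega(y)\approx\|y\|$. Surjectivity follows by extending $\phi\in(\psaM)^*$ to $\tilde\phi(\omega_1+i\omega_2):=\phi(\omega_1)+i\phi(\omega_2)$, which is complex-linear and bounded; the corresponding $y\in\m=(\m_*)^*$ satisfies $\omega(y-y^*)=0$ for all $\omega\in\psaM$, and since $\psaM+i\psaM$ separates $\m$ this forces $y\in\saM$.

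For $1$-norming, start from the $1$-norming property of $D\subset\m$ supplied by \prref{P:Sigma}: for $\omega\in\psaM$, pick $x\in D$ with $\|x\|\le 1$ and $|\omega(x)|\approx\|\omega\|$, rotate by a unimodular scalar so that $\omega(x)$ is real and non-negative, and take $y:=\tfrac12(x+x^*)$. Since $D$ is a $*$-subalgebra, $y\in D_{sa}$ with $\|y\|\le 1$, and $\omega(y)=\Re\omega(x)=|\omega(x)|$.

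The $\Sigma$-subspace assertion is the core of the proposition, and I would transfer the $\Sigma$-structure of $D$ to $(\psaM)^*$ through $S$. Building on the proof of \theoref{T:main1}, in each WCG summand $L_{p_\lambda}\m_*$ fix a weakly compact generating set $K_\lambda$; then the family $(m_\alpha)_{\alpha\in A}$ indexed by $A:=\bigsqcup_\lambda K_\lambda$ is linearly dense in $\m_*$ and witnesses $D=\{x\in\m:\{\alpha\in A:x(m_\alpha)\ne 0\}\text{ is countable}\}$. In $\psaM$ I consider the doubled family $(Sm_\alpha)_{\alpha\in A}\cup(S(im_\alpha))_{\alpha\in A}$, which is linearly dense because $(Sm_\alpha)_\alpha$ alone already is (by continuity and surjectivity of $S\colon\m_*\to\psaM$). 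Using $y=y^*$ one computes directly $\Psi(y)(Sm_\alpha)=\Re m_\alpha(y)$ and $\Psi(y)(S(im_\alpha))=-\Im m_\alpha(y)$; hence the index set of non-vanishing entries is countable if and only if $\{\alpha:m_\alpha(y)\ne 0\}$ is, if and only if $y\in D\cap\saM=D_{sa}$. The main technical point is to reconcile this indexed-family description with the set-form of characterisation~(1) of Theorem~A: since $S$ may collapse distinct $m_\alpha$, one should either pass to the equivalent Markushevich-basis formulation (characterisation~(2) of Theorem~A), or, alternatively, establish a direct $1$-unconditional decomposition of $\psaM$ into WCG real summands indexed by unordered pairs $\{\lambda,\mu\}\subset\Lambda$ and invoke \prref{P:1-uncond}.
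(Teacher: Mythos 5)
Your treatment of the isometry, of the surjectivity of $\Psi$ and of the $1$-norming property of $\Psi(D_{sa})$ is correct, though it differs in detail from the paper: the paper proves the isometry via the formula $\|x\|=\sup\{|\langle x\xi,\xi\rangle|:\|\xi\|\le1\}$ for $x\in\saM$ and gets $1$-norming for free from the machinery below, whereas your symmetrisation arguments (replacing $\omega$ by $S\omega$, respectively $x$ by $\tfrac12(x+x^*)$ inside the $*$-subalgebra $D$) are equally valid and somewhat more self-contained. The genuine divergence is in the $\Sigma$-subspace part. The paper does not build a witnessing set by hand: it forms $D_R=\{\omega\mapsto\Re\omega(x):x\in D\}$ and quotes \cite[Proposition 3.4]{kal-complex} to see that $D_R$ is a $1$-norming $\Sigma$-subspace of the dual of $(\m_*)_R$; it then uses Proposition~\ref{P:rozkop} to exhibit $\psaM$ as the set of $\omega$ with $\Re\omega(iqxq)=0$ for all self-adjoint $x$ and $\sigma$-finite $q$, hence as a $\sigma((\m_*)_R,D_R)$-closed subspace, and quotes \cite[Theorem 4.38]{survey} to conclude that the restrictions of $D_R$ to $\psaM$ form a $1$-norming $\Sigma$-subspace, finally identified with $\Psi(D_{sa})$. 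Your route avoids both citations and Proposition~\ref{P:rozkod below}---but at the price of the difficulty you yourself flag, and that difficulty is not cosmetic.

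Concretely, two things go wrong. First, a weakly compact generating set $K_\lambda$ of $L_{p_\lambda}\m_*$ need not witness condition~(1) of Theorem~A for that summand (the unit ball of a nonseparable Hilbert space is weakly compact and generating but almost every functional is nonzero on uncountably many of its points); you must instead take the countably-supporting set supplied by Proposition~\ref{P:hg}, i.e.\ the image of an orthonormal basis of $H_{\xi_\lambda}$ under $\Phi$. This is fixable. Second, and more seriously, neither of your proposed remedies for the collapsing problem works. Passing to formulation~(2) of Theorem~A does not help: the images $Sm_\alpha$, $S(im_\alpha)$ carry no biorthogonal system and need not even be linearly independent, so they do not form a Markushevich basis, and producing one would require precisely the restriction argument you are trying to avoid. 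The alternative of a $1$-unconditional decomposition of $\psaM$ into real summands indexed by unordered pairs $\{\lambda,\mu\}$ fails outright: the natural projection onto a finite set $F$ of such summands is the preadjoint of the Schur-type multiplier $x\mapsto\sum_{\{\lambda,\mu\}\in F}(p_\lambda xp_\mu+p_\mu xp_\lambda)$, and such a multiplier is not contractive for a general symmetric pattern $F$. Already for $\m=M_3(\cc)$ with the three coordinate (cyclic) projections and $F$ the three off-diagonal pairs, this is the Schur multiplier by $J_3-I_3$, whose norm exceeds $1$ since its support is not a disjoint union of complete bipartite rectangles; so condition~(2) in the definition of a $1$-unconditional sum is violated and Proposition~\ref{P:1-uncond} cannot be invoked. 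To close the argument along your lines you would have to show that $S$ is countable-to-one on the relevant uncountable fibres of your witnessing family (or choose the family so that this holds), and absent that, the clean way out is the paper's combination of Proposition~\ref{P:rozkop} with the restriction theorem \cite[Theorem 4.38]{survey}.

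\begin{rem}
A typographical correction to the above: the reference in the first paragraph should read Proposition~\ref{P:rozkop}.
\end{rem}
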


\begin{proof} 
It is clear that $\Psi$ is a linear operator between the real Banach spaces $\saM$ and $(\psaM)^*$
and that $\|\Psi(x)\|\le\|x\|$ for each $x\in\saM$. Moreover, $\Psi$ is an isometry due to the facts
that
$$\|x\|=\sup\{|\langle x\xi,\xi\rangle| : \xi\in H,\|\xi\|\le 1\}, \quad x\in\saM,$$
and that the functional $a\mapsto\langle a\xi,\xi\rangle$ belongs to $\psaM$ and has norm at most $\|\xi\|^2$.
It remains to show that $\Psi$ is onto. So, let $\f\in(\psaM)^*$. By the Hahn-Banach theorem it can be extended to a continuous real-valued real-linear functional $\f_1$ on $\m_*$. Then there is a complex linear functional $\f_2$ on $\m_*$ such that
$\f_1(\omega)=\Re \f_2(\omega)$ for $\omega\in\m_*$. Since the dual to $\m_*$ is $\m$, $\f_2$ is represented by some 
$a\in\m$. Then $a=x+iy$ for $x,y\in\saM$. Then for any $\omega\in\psaM$ we have
$$\f(\omega)=\f_1(\omega)=\Re\f_2(\omega)=\Re\omega(a)=\omega(x),$$
in other words $\f=\Psi(x)$.

Further, recall that
$$\psaM=\{\omega\in \m_* : \omega(x)\in\R\mbox{ for each }x\in\saM\}.$$
It follows from Proposition~\ref{P:rozkop} that
\begin{equation}
\label{eq:m*}
\begin{split}
\psaM=\{\omega\in\m_*: \omega(qxq)\in\R\mbox{ for each }x\in\saM\\ \mbox{ and each $\sigma$-finite projection }q\in\m\}.
\end{split}
\end{equation}

Let $D$ be the $1$-norming $\Sigma$-subspace of $\m=(\m_*)^*$ described in Proposition~\ref{P:Sigma}. Let $(\m_*)_R$ denote the Banach space $\m_*$ considered as a real space and let $(\m_*)_R^*$ denote its dual. Let
$$D_R=\{ \omega\mapsto\Re \omega(x) : x\in D\}.$$
Then $D_R$ is a $1$-norming $\Sigma$-subspace of $(\m_*)_R^*$ by \cite[Proposition 3.4]{kal-complex}. Moreover, if $\omega\in\m_*$, $x\in\saM$ and $q$ is a projection, then $\omega(qxq)\in\R$ if and only if $\Re\omega(iqxq)=0$.
Thus
\begin{equation*}\begin{split}\psaM=\{\omega\in\m_*: \Re \omega(iqxq)=0\mbox{ for each }x\in\saM\\ 
\mbox{ and each $\sigma$-finite projection }q\in\m\}.\end{split}\end{equation*}
Since $iqxq=q(ix)q\in D$ for each $x\in\m$ and each $\sigma$-finite projection $q\in\m$, the functional $\omega\mapsto \Re\omega(iqxq)$ belongs in this case to $D_R$. It follows that $\psaM$ is a $\sigma(\psaM,D_R)$-closed linear subspace of
$(\m_*)_R$. ($\sigma(\psaM,D_R)$ denotes the weak topology on $\psaM$ induced by $D_R$.) It follows from \cite[Theorem 4.38]{survey} that
$$D_0=\{ \f|_{\psaM} : \f\in  D_R\}$$
is a $1$-norming $\Sigma$-subspace of $(\psaM)^*$. It remains to verify that $D_0=\Psi(D_{sa})$.

Let $x\in D_{sa}$. Then $x\in D\cap \saM$. In particular, for any $\omega\in\psaM$ we have 
$$\Re\omega(x)=\omega(x)=\Psi(x)(\omega),$$
hence $\Psi(x)\in D_0$. Conversely, let $\f\in D_0$. Then there is $\f_1\in D_R$ with $\f=\f_1|_{\psaM}$. Further, there is $a\in D$ such that $\f_1(\omega)=\Re\omega(a)$ for $\omega\in\m_*$. Then $a=x+iy$ with $x,y\in\saM$. Since $a^*\in D$ as well, clearly $x,y\in D$. Hence $x,y\in D_{sa}$. Moreover, for $\omega\in\psaM$ we have
$$\f(\omega)=\Re\omega(a)=\omega(x)=\Psi(x)(\omega),$$
hence $\f\in\Psi(D_{sa})$. This completes the proof.
\end{proof}

\begin{proof}[Proof of Theorem~\ref{T:main2}]
The space $\psaM$ is $1$-Plichko by Proposition~\ref{P:main2}.
Further, if $\m$ is $\sigma$-finite, $\m_*$ is WCG by Theorem~\ref{T:main1}. Moreover, $\psaM$ is the image of $\m_*$ by the real-linear projection $S$, hence $\psaM$ is WCG by Proposition~\ref{P:wcg}(iii). 

Finally, suppose that $\m$ is not $\sigma$-finite. Let $(\omega_\lambda)_{\lambda\in\Lambda}$ be the uncountable family in $\m_*$ constructed at the end of the proof of Theorem~\ref{T:main1}. It is clear that $\omega_\lambda\in\psaM$ for any $\lambda\in\Lambda$ and that the closed linear span of this family in the real Banach space $\psaM$ is isometric to the real version of the space $\ell_1(\Lambda)$ and hence $\psaM$ is not WLD.
\end{proof}

\begin{rem}
(1) We proved that $\m_*$ is $1$-Plichko since it is the $1$-unconditional sum of WCG subspaces. To get the result we used the classical but highly nontrivial assertion (v) of Proposition~\ref{P:wcg}. It is possible to give a more elementary proof
using Proposition~\ref{P:hg}. Indeed, by the proof of Lemma~\ref{L:wcg1} the spaces $L_{p_\lambda}\m_*$ and $R_{p_\lambda}\m_*$
satisfy the assumptions of Proposition~\ref{P:hg} in place of $Y$, hence it easily follows that they are WLD.

(2) Proposition~\ref{P:Sigma} shows that there is a canonical $1$-norming $\Sigma$-subspace of $\m=(\m_*)^*$. However, there can be many different (non-canonical) $1$-norming $\Sigma$-subspaces, cf. \cite[Example 6.9]{survey} where this is studied for the space $\ell_1(\Gamma)$. However, there is a unique $1$-norming $\Sigma$-subspace which is a two-sided ideal. This is proved in the following proposition.
\end{rem}

\begin{pr} Let $S$ be a $1$-norming $\Sigma$-subspace of $\m=(\m_*)^*$ which is a two-sided ideal in $\m$. Then $S=D$ where $D$ is the $\Sigma$-subspace described in Proposition~\ref{P:Sigma}.
\end{pr}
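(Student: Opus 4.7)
The plan is to establish both inclusions $D \subset S$ and $S \subset D$ by combining the ideal structure with the $1$-norming $\Sigma$-subspace property, reducing each direction to Theorem~\ref{T:main1} applied to the compressed algebra $p\m p$ for a suitable projection $p$.

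For $D \subset S$, it suffices to show that every cyclic projection $p \in \m$ lies in $S$. Indeed, by Proposition~\ref{P:rozklad} each $\sigma$-finite projection is then a countable sum of cyclic projections whose partial sums converge in the weak-$*$ topology on $\m$, and $\Sigma$-subspaces are weak-$*$ sequentially closed (an immediate consequence of characterization~(1) of Theorem~A), so every $\sigma$-finite projection lies in $S$, and the ideal property then promotes any $x = qxq \in D$ into $S$. To prove $p \in S$ for cyclic $p$, I would consider $pSp = S \cap p\m p$. Transporting a linearly dense indexed family witnessing $S$ as a $\Sigma$-subspace from $\m_*$ to $(p\m p)_*$ via the compression $L_p R_p$ shows that $pSp$ is a $1$-norming $\Sigma$-subspace of $p\m p$ and a two-sided ideal of $p\m p$. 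Since $p$ is cyclic, $p\m p$ is $\sigma$-finite, so Theorem~\ref{T:main1} applied to $p\m p$ gives that $(p\m p)_*$ is WLD, and hence the unit ball $B_{p\m p}$ is Corson compact (in particular angelic) in the weak-$*$ topology. The set $pSp \cap B_{p\m p}$ is both weak-$*$ dense (from $1$-norming) and weak-$*$ sequentially closed (from the $\Sigma$-subspace property), and angelicity forces it to equal $B_{p\m p}$; consequently $pSp = p\m p$ and in particular $p \in S$.

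For $S \subset D$, the plan is to show that for every $x \in S$ the left and right supports $\ell(x), r(x)$ are $\sigma$-finite projections; then $q := \ell(x) \vee r(x)$ is a $\sigma$-finite projection with $x = qxq$, so $x \in D$. The ideal property gives $xx^* \in S$, and the bounded approximants $xx^*(xx^* + 1/n)^{-1}$ lie in $S$ and converge weak-$*$ to $\ell(x)$ by bounded functional calculus on the spectral measure of $xx^*$; weak-$*$ sequential closure yields $\ell(x) \in S$, and symmetrically $r(x) \in S$. The key auxiliary claim is that every projection $p \in S$ is $\sigma$-finite. From the ideal property and $p \in S$ one sees immediately that $p\m p \subset S$ (for $a \in p\m p$, $a = p(ap)$ with $ap \in S$). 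Choosing a Markushevich basis $(x_\alpha, x_\alpha^*)_{\alpha \in \Gamma}$ witnessing $S$ as a $\Sigma$-subspace (characterization~(2) of Theorem~A), the restricted family $(x_\alpha|_{p\m p})_{\alpha \in \Gamma}$ is linearly dense in $(p\m p)_*$, and for every $y \in p\m p \subset S$ the sets $\{\alpha : x_\alpha|_{p\m p}(y) \ne 0\}$ and $\{\alpha : x_\alpha(y) \ne 0\}$ coincide and are countable. Hence $p\m p$ is a $\Sigma$-subspace of itself (viewed as the dual of $(p\m p)_*$), i.e., $(p\m p)_*$ is WLD, so Theorem~\ref{T:main1} forces $p\m p$ to be $\sigma$-finite.

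The main technical obstacle, shared by both directions, is the \emph{compression trick}: re-indexing a linearly dense family (or a Markushevich basis) witnessing the $\Sigma$-subspace property of $S$ so that the same property descends to the compressed algebra $p\m p$ (or to $pSp$), despite possible collapses under the compression map $L_p R_p$ or under restriction to $p\m p$. Once this is properly handled, Theorem~\ref{T:main1} applied to the $\sigma$-finite/WLD dichotomy in $p\m p$ closes both arguments.
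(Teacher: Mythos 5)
Your proof is correct, but it takes a genuinely different route from the paper's. For $D\subset S$ both arguments reduce to showing that every cyclic projection $p$ lies in $S$, but the paper does this by hand: $S$ is countably weak$^*$-closed, hence $S\cap B_\m$ is weak$^*$ countably compact, so the normal state $\omega=\langle\,\cdot\,\xi,\xi\rangle$ (with $\xi$ a generating unit vector for $p$) attains its norm at a self-adjoint $b\in S\cap B_\m$; then $q=\chi_{\R\setminus\{0\}}(b)$ lies in $S$ because bounded Baire-class-one functional calculus preserves $S$, and $q\xi=\xi$ yields $p=qp\in S$ via Lemma~\ref{L:cyc1}. You instead use that $S\cap p\m p$ is $1$-norming and weak$^*$ sequentially closed in the $\sigma$-finite algebra $p\m p$, whose unit ball is angelic because $(p\m p)_*$ is WLD; this works, but it imports the nontrivial classical fact that Corson compacta are angelic, which the paper's argument deliberately avoids --- and note that you do not actually need the ``compression trick'' for this step, since the weak$^*$ topology of $p\m p$ is just the restriction of $\sigma(\m,\m_*)$, so sequential closedness of $S\cap p\m p$ is immediate from the corresponding property of $S$ in $\m$. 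For the reverse inclusion the paper simply quotes \cite[Lemma 2]{almostwld} (comparable norming $\Sigma$-subspaces coincide), whereas you prove $S\subset D$ directly: every projection $p\in S$ satisfies $p\m p\subset S$, which makes $(p\m p)_*$ WLD and hence $p$ $\sigma$-finite by Theorem~\ref{T:main1}, and the support projections $\ell(x)=\chi_{(0,\infty)}(xx^*)$ and $r(x)$ of any $x\in S$ belong to $S$ as weak$^*$ limits of the sequence $xx^*(xx^*+1/n)^{-1}\in S$ (membership of these approximants in $S$ already follows from the ideal property alone). This direct proof of $S\subset D$, and in particular the observation that every projection in a $\Sigma$-subspace ideal must be $\sigma$-finite, is a genuine alternative to the citation; the only points to write out carefully are that the restricted family in your WLD argument need only witness condition (1) of Theorem A (biorthogonality may be lost under restriction) and that the supremum of two $\sigma$-finite projections is again $\sigma$-finite.
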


\begin{proof} 
Being a $\Sigma$-subspace, $S$ is countably weak$^*$-closed, i.e., 
\begin{equation}
\label{eq:S1}
\overline{A}^{w^*}\subset S\mbox{ for each }A\subset S\mbox{ countable.}
\end{equation}
Indeed, it easily follows from the condition (1) of Theorem A. In particular, $S$ is norm-closed, hence it is a $C^*$-subalgebra of $\m$ \cite[Corollary 4.2.10]{KR1}. In particular, the continuous functional calculus works in $S$, i.e. $f(x)\in S$ whever $x\in S$ is self-adjoint and $f:\R\to \R$ is a continuous function with $f(0)=0$. Further, we even have
\begin{equation}
\label{eq:S2}
\begin{split}
f(x)\in S \mbox{ whenever }x\in S\cap \saM\mbox{ and }f:\R\to\R \mbox{ is a bounded function}\\\mbox{of the first Baire class with }f(0)=0.
\end{split}
\end{equation}
Indeed, let $f$ be such a  function. Then there is a uniformly bounded sequence of continuous functions $f_n:\R\to\R$ with $f_n(0)=0$ pointwise converging to $f$. Given any self-adjoint $x\in S$, we have
$f_n(x)\in S$ as well and, moreover, $f_n(x)\to f(x)$ in the weak operator topology. This topology coincides with the weak$^*$-one on bounded sets, hence $f_n(x)\to f(x)$ in the weak$^*$ topology, hence $f(x)\in S$ as well by \eqref{eq:S1}.

We continue by showing that any cyclic projection belongs to $S$. So, let $p\in\m$ be a cyclic projection and $\xi\in H$ a generating vector for $p$ of norm one. Set $\omega(x)=\langle x\xi,\xi\rangle$ for $x\in\m$. Then $\omega$ is a normal state on $\m$. In particular, $\omega\in\psaM$ and $\|\omega\|=1$. Since $S$ is $1$-norming and $S\cap B_\m$ is weak$^*$ countably compact (by \eqref{eq:S1}), there is some $a\in S\cap B_\m$ with $\omega(a)=1$. Since $\omega$ is selfadjoint, we have $\omega(a^*)=1$ as well, hence $b=\frac12(a+a^*)$ is a selfadjoint element of $S\cap B_\m$ with $\omega(b)=1$.

Set $q=\chi_{\R\setminus\{0\}}(b)$. Since $\chi_{\R\setminus\{0\}}$ is of the first Baire class, $q\in S$ by \eqref{eq:S2}. Further, $q$ is clearly a projection. It follows from the properties of the function calculus that $q$ commutes with $b$ and
that
$$qb=\chi_{\R\setminus\{0\}}(b)\id(b)=(\chi_{\R\setminus\{0\}}\cdot\id)(b)=\id(b)=b,$$
hence $b=qbq$. Since
$$1=\omega(b)=\langle b\xi,\xi\rangle,$$
necessarily $b\xi=\xi$, hence $\xi$ belongs to the range of $b$ and so also to the range of $q$. Thus $q\xi=\xi$, hence $(1-q)\xi=0$, so $(1-q)p=0$ by Lemma~\ref{L:cyc1}, hence $p=qp$ and we conclude $p\in S$ (since $S$ is an ideal).

It follows that $S$ contains all cyclic projections and hence all $\sigma$-finite projections (by \eqref{eq:S1} and Proposition~\ref{P:rozklad}. Since $S$ is an ideal, it follows from the description of $D$ in Proposition~\ref{P:Sigma} that $D\subset S$. Hence $D=S$ by \cite[Lemma 2]{almostwld}.
\end{proof}


\end{document}